\newcommand{\aut}[0]{\operatorname{Aut}}  
\newcommand{\Aut}[0]{\operatorname{Aut}}   
\newcommand{\Bir}[0]{\operatorname{Bir}}
\newcommand{\diff}[0]{\operatorname{Diff}}  
\newcommand{\Pic}[0]{\operatorname{Pic}}
\newcommand{\PGL}[0]{\operatorname{PGL}}
\newcommand{\Gal}[0]{\operatorname{Gal}}
\newcommand{\PO}[0]{\operatorname{PO}}
\newcommand{\sS}[0]{{\mathbb S}}
\newcommand{\p}[0]{{\mathbb P}}
\newcommand{\F}[0]{{\mathbb F}}
\renewcommand{\c}[0]{{\mathbb C}}
\newcommand{\C}{{\mathbb C}}
\renewcommand{\r}[0]{{\mathbb R}} 
\newcommand{\s}[0]{{\mathcal S}} 
\renewcommand{\d}[0]{{\mathcal D}} 
\newcommand{\I}{\mathrm{I}}
\newcommand{\II}{\mathrm{II}}
\newcommand{\III}{\mathrm{III}}
\newcommand{\IV}{\mathrm{IV}}
\newcommand{\im}{{\mathbf i}}
\renewcommand{\epsilon}[0]{\varepsilon}
\numberwithin{equation}{section}%
\newtheorem{thm}[equation]{Theorem}%
\newtheorem{prop}[equation]{Proposition}
\newtheorem{lem}[equation]{Lemma}
\newtheorem{cor}[equation]{Corollary}
\theoremstyle{remark}
\newtheorem{rem}[equation]{Remark}
\newtheorem{example}[equation]{Example}
\theoremstyle{definition}
\newtheorem{dfn}[equation]{Definition}
\newtheorem{defn}[equation]{Definition}
\title[Cremona groups of real surfaces]{Cremona groups of real surfaces}
\thanks{First author supported by the SNSF grant no PP00P2\_128422 /1}
\thanks{This research was partially supported by ANR Grant "BirPol"  ANR-11-JS01-004-01}
\author{J\'er\'emy Blanc}
\address{J\'er\'emy Blanc,
Mathematisches Institut,
Universit\"at Basel,
Rheinsprung 21,
CH-4051 Basel,
Schweiz}
\email{Jeremy.Blanc@unibas.ch}
\author{Fr\'ed\'eric Mangolte}
\address{Fr\'ed\'eric Mangolte,
   LUNAM Universit\'e, LAREMA, Universit\'e d'Angers, Bd. Lavoisier, 49045 Angers Cedex 01, France} 
   \email{frederic.mangolte@univ-angers.fr}
\begin{document}

\begin{abstract}
We give an explicit set of generators for various natural subgroups of the real Cremona group $\Bir_\r(\p^2)$. This completes and unifies former results by several authors.
\end{abstract}

\maketitle

\begin{quote}\small
\textit{MSC 2000:} 14E07, 14P25, 14J26
\par\medskip\noindent
\textit{Keywords:} real algebraic surface, rational surface, birational geometry,
algebraic automorphism, Cremona transformation
\end{quote}

\section{Introduction}
\subsection{On the real Cremona group $\Bir_\r(\p^2)$}
The classical Noether-Castelnuovo Theorem (1917) gives generators of the group $\Bir_\c(\p^2)$ of birational transformations of the complex projective plane. The group is generated by the  biregular automorphisms, which form the group $\Aut_\c(\p^2)\cong \PGL(3,\C)$ of projectivities, and by the standard quadratic transformation 
$$
\sigma_0\colon (x:y:z)\dasharrow (yz:xz:xy).
$$

This result does not work over the real numbers. Indeed, recall that a \emph{base point} of a birational transformation is a (possibly infinitely near) point of indeterminacy; and note that the base points of the quadratic involution
$$
\sigma_1\colon (x:y:z)\dasharrow (y^2+z^2: xy: xz)
$$
are not real. Thus $\sigma_1$ cannot be generated by projectivities and $\sigma_0$. More generally, we cannot generate this way maps having non real  base-points.
Hence the group $\Bir_\r(\p^2)$ of birational transformations of the real projective plane is not generated by $\Aut_\r(\p^2)\cong \PGL(3,\r)$  and $\sigma_0$.

The first result of this note is that $\Bir_\r(\p^2)$ is generated by $\Aut_\r(\p^2)$, $\sigma_0$, $\sigma_1$, and a family of birational maps of degree $5$ having only non real base-points.

\begin{thm}\label{thm:BirP2}
The group $\Bir_\r(\p^2)$ is generated by $\Aut_\r(\p^2)$, $\sigma_0$, $\sigma_1$, and the \emph{standard quintic} transformations of $\p^2$ $($defined in Example~$\ref{Exa:QuinticStd})$.
\end{thm}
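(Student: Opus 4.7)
The plan is to prove the theorem by induction on the degree $d = \deg \phi$ of a given $\phi \in \Bir_\r(\p^2)$. The base case $d = 1$ is immediate, since then $\phi \in \Aut_\r(\p^2) = \PGL(3,\r)$. For $d \geq 2$, the strategy is to find a generator $\tau$ from the list $\{\sigma_0, \sigma_1, \text{standard quintics}\}$, after conjugation by a suitable element of $\Aut_\r(\p^2)$, such that $\deg(\phi \circ \tau^{-1}) < d$; iterating reduces the degree to $1$ and expresses $\phi$ as the desired product.

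The choice of $\tau$ at each step is dictated by the real structure of the base points of $\phi$. These base points, possibly infinitely near, are permuted by $\Gal(\c/\r)$, so each is either real or comes with a complex conjugate. A real Noether-type inequality would guarantee that for $d \geq 2$ one can find a Galois-invariant configuration of at most three base points (counted with infinitely near multiplicities) whose multiplicities add up to more than $d$. Three cases arise, according to the Galois structure of this configuration: (a) three real base points, handled by composing with a $\PGL(3,\r)$-conjugate of $\sigma_0$ chosen to send its three real base points to those of $\phi$; (b) one real base point and one pair of complex conjugate base points, handled by the analogous composition with a conjugate of $\sigma_1$; (c) three pairs of complex conjugate base points, handled by composition with a suitable standard quintic.

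The main obstacle I expect is case (c), where no real base point is available to pivot on. One must prove that whenever $\phi$ has only non-real base points, one can select three conjugate pairs in sufficiently general position to serve as the base locus of a real standard quintic of the correct type. This requires both a refined Noether-type inequality adapted to configurations of conjugate pairs and a non-trivial existence statement for the real quintic linear system with prescribed non-real base scheme. A uniform way to organise all three cases is through the Sarkisov program for real rational surfaces: every $\phi \in \Bir_\r(\p^2)$ factors into elementary links between Mori fibrations defined over $\r$ (namely $\p^2_\r$, real Del Pezzo surfaces, and real conic bundles), and the three families of generators in the statement correspond precisely to the three types of elementary links from $\p^2_\r$, distinguished by the Galois orbit structure of their base loci. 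Checking that these exhaust the links needed to leave and re-enter $\p^2_\r$ closes the induction, and the degree-reduction argument then finishes the proof.
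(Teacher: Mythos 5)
Your primary plan---induction on the degree, using a real Noether-type inequality to locate a Galois-invariant cluster of high-multiplicity base points and matching it to $\sigma_0$, $\sigma_1$, or a standard quintic---has a genuine gap that you partly acknowledge but do not close. The multiset of multiplicities of a real map is Galois-invariant ($m_q=m_{\bar q}$), but the three points of maximal multiplicity provided by the Noether inequality need not form one of your three clean configurations: they can consist, for instance, of two real points and one imaginary point (whose conjugate then also has multiplicity $m_3$, giving a Galois-invariant cluster of four points that is the base locus of none of the proposed generators, and for which $m_1+2m_3>d$ need not hold), or they can involve infinitely near points, where the quadratic or quintic map based at them simply does not exist. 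These mixed and degenerate configurations are exactly where the argument must do real work, and your ``refined Noether-type inequality adapted to conjugate pairs'' together with the existence of the real quintic system with prescribed non-real base scheme is asserted, not proved. As it stands the degree does not provably drop at each step.

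Your closing remark that the Sarkisov program organises the three cases is the right instinct---it is the route the paper takes---but the correspondence you state is not correct: there are only \emph{two} types of Sarkisov links out of $\p^2$ (blow-up of a real point, landing on $\F_1$, and the inverse stereographic projection to $Q_{3,1}$ centered at an imaginary pair), so the three generator families cannot ``correspond precisely'' to links from $\p^2$. In the paper, the induction is on the \emph{number of links} in a decomposition (reduced to links of type $\I$, $\III$ and standard links of type $\II$ by Corollary~\ref{Cor:DecSimple}), and the generators arise from specific chains through the intermediate Mori fibrations $\F_0$, $\F_1$, $Q_{3,1}$ and $\d_6$: quadratic maps conjugate to $\sigma_0$ or $\sigma_1$ come from short chains through $\F_1$, $\F_0$ or $Q_{3,1}$, while the standard quintics come from five-link chains $\p^2\dasharrow Q_{3,1}\dasharrow\d_6\dasharrow\d_6\dasharrow Q_{3,1}\dasharrow\p^2$ (Lemma~\ref{Lem:FiveLinksDegree5}), with Lemmas~\ref{lem:Exchange} and~\ref{Lem:Quinticgenstd} handling the remaining configurations. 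That case analysis, which replaces your multiplicity bookkeeping and resolves the mixed cases automatically, is the substance of the proof and is missing from your proposal.
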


The proof of this result follows the so-called Sarkisov program, which amounts to decompose a birational map between Mori fiber spaces as a sequence of simple maps, called \emph{Sarkisov links}. The description of all possible links has been done in \cite{IskFact} for perfect fields, and in \cite{bib:Pol} for real surfaces. We recall it in Section~\ref{Sec:Mori} and show how to deduce Theorem~\ref{thm:BirP2} from the list of Sarkisov links.

Let $X$ be an algebraic variety defined over $\r$, we denote as usual by $X(\r)$ the set of real points endowed with the induced algebraic structure. The topological space $\p^2(\r)$ is then the real projective plane, letting $\F_0:=\p^1\times\p^1$, the space $\F_0(\r)$ is the torus $\sS^1\times\sS^1$  and letting $Q_{3,1}=\{(w:x:y:z)\in \p^3\ |\ w^2=x^2+y^2+z^2\}$, the real locus $Q_{3,1}(\r)$ is the sphere~$\sS^2$.

Recall that an \emph{automorphism} of $X(\r)$ is a birational transformation $\varphi\in\Bir_\r(X)$ such that $\varphi$ and $\varphi^{-1}$ are defined at all real points of $X$. The set of such maps form a group $\Aut(X(\r))$, and we have natural inclusions
 $$\Aut_\r(X)\subset \Aut(X(\r))\subset\Bir_\r(X).$$

The strategy used to prove Theorem~\ref{thm:BirP2} allows us to treat similarly the case of natural subgroups of $\Bir_\r(\p^2)$, namely the groups $\Aut(\p^2(\r))$, $\Aut(Q_{3,1}(\r))$ and $\Aut(\F_0(\r))$ of the three \emph{minimal} real rational surfaces (see \ref{cor.mini}). This way, we give a unified treatment to prove three theorems on generators, the first two of them already proved in a different way in \cite{rv} and \cite{km1}. 

Observe that $\Aut(Q_{3,1}(\r))$ and $\Aut(\F_0(\r))$ are not really subgroups of $\Bir_\r(\p^2)$, but each of them is isomorphic to a subgroup which is determined up to conjugation. Indeed, for any choice of a birational map $\psi\colon \p^2 \dashrightarrow X$ ($X=Q_{3,1}$ or $\F_0$), $\psi^{-1}\Aut(X(\r))\psi\subset \Bir_\r(\p^2)$.


\begin{thm}[\cite{rv}]\label{ThmRongaVust}
The group $\Aut(\p^2(\r))$ is generated by $\Aut_\r(\p^2)=\PGL(3,\r)$ and by standard quintic transformations.
\end{thm}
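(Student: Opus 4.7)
The plan is to parallel the proof of Theorem~\ref{thm:BirP2}, using the same Sarkisov-program machinery for real surfaces (Section~\ref{Sec:Mori}, following \cite{bib:Pol}), but restricting the analysis to the subgroup $\Aut(\p^2(\r))\subset \Bir_\r(\p^2)$. Given $\varphi \in \Aut(\p^2(\r))$, the hypothesis that both $\varphi$ and $\varphi^{-1}$ are defined at every real point of $\p^2$ forces all base points of $\varphi$ and $\varphi^{-1}$ (including the infinitely near ones) to be non-real, hence to occur in pairs of complex conjugates. In particular, the maps $\sigma_0$ and $\sigma_1$ that appear in Theorem~\ref{thm:BirP2} cannot occur as factors, since each of them has real base points.

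Next, I would apply the Sarkisov program and factor $\varphi = \chi_n \circ \cdots \circ \chi_1$ as a sequence of real Sarkisov links between minimal real rational surfaces. I would then refine the factorization so that each intermediate composition $\chi_k \circ \cdots \circ \chi_1 \colon \p^2 \dashrightarrow Y_k$ is itself an isomorphism at real points---equivalently, so that every individual link $\chi_i$ has only non-real base points. Inspection of the classification of real Sarkisov links shows that, under this constraint, the admissible intermediate surfaces are reduced to $\p^2$, $Q_{3,1}$, and $\F_0$, while Hirzebruch surfaces $\F_n$ with $n\ge 1$ and non-trivial real conic bundles are ruled out because they force contraction of a real curve and hence the appearance of a real indeterminacy.

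Finally, I would verify that any round trip $\p^2 \dashrightarrow \cdots \dashrightarrow \p^2$ through these admissible links coincides, up to composition on either side with elements of $\PGL(3,\r)$, with a standard quintic transformation as in Example~\ref{Exa:QuinticStd}. Together with the identification $\Aut_\r(\p^2) = \PGL(3,\r)$, this shows that $\Aut(\p^2(\r))$ is generated by $\PGL(3,\r)$ and standard quintics. A minor auxiliary computation is to exhibit the factorization of the prototypical round trip $\p^2 \dashrightarrow Q_{3,1} \dashrightarrow \p^2$ going through a pair of complex conjugate base points on each side, and to check that it produces a homaloidal net of quintics with only non-real assigned base points---matching the definition of a standard quintic transformation.

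The step I expect to be the main obstacle is the refinement described above: showing that the Sarkisov factorization of a map in $\Aut(\p^2(\r))$ can always be reorganised so that every link individually has only non-real base points. This requires careful tracking of the base loci across successive links and the use of Noether-type relations among Sarkisov links to cancel any real indeterminacies that may appear transiently. The naive Sarkisov factorization produced by the program need not respect the automorphism condition at intermediate stages, and it is precisely here that the passage from $\Bir_\r(\p^2)$ to its subgroup $\Aut(\p^2(\r))$ genuinely enters the argument.
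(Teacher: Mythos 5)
Your plan breaks at exactly the step you flag as the main obstacle, and the break is not a technical difficulty but an impossibility: there is \emph{no} Sarkisov link out of $\p^2$ that is an isomorphism on real points. The only links starting from $\p^2$ are the blow-up of a real point (type $\I$, to $\F_1$) and the link of type $\II$ to $Q_{3,1}$ of Example~\ref{Exa:Sarkisov}(2), which blows up two imaginary points but contracts the real line through them to a real point of $Q_{3,1}$ --- so its inverse has a real base-point. Hence every nontrivial $\varphi\in\Aut(\p^2(\r))$ necessarily factors through links that individually fail to be regular on real points, and your proposed refinement ``every $\chi_i$ has only non-real base points'' cannot be carried out. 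The paper's proof (Proposition~\ref{Prop:Gen5}) instead accepts this: the first link $\p^2\dasharrow Q_{3,1}$ creates one real base-point for the remainder of the chain, and the argument tracks this single real point until it is consumed by a final stereographic projection. The key computation is Lemma~\ref{Lem:FiveLinksDegree5}: a five-link block $\p^2\dasharrow Q_{3,1}\dasharrow \d_6\dasharrow\d_6\dasharrow Q_{3,1}\dasharrow\p^2$, in which the two real base-points (of the first and last links) match up, is an automorphism of $\p^2(\r)$ of degree $5$; induction then peels off such blocks.

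Two further concrete errors follow from this. First, your list of admissible intermediate surfaces ($\p^2$, $Q_{3,1}$, $\F_0$) is wrong: the surface that actually occurs, and is indispensable, is the conic bundle $\d_6$ --- the quintics are built from $2+2+2$ imaginary points blown up on $\p^2$, $Q_{3,1}$ and $\d_6$ respectively --- while $\F_0$ never appears in this proof. Second, your ``prototypical round trip'' $\p^2\dasharrow Q_{3,1}\dasharrow\p^2$ does not produce a quintic: composing the inverse stereographic projection with a stereographic projection from another real point gives a quadratic map with one real and two imaginary base-points (conjugate to $\sigma_1$, hence not in $\Aut(\p^2(\r))$), and from the same point it gives a projectivity. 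Finally, even once one has generation by degree-$5$ elements, the round trips can produce \emph{special} quintic transformations (with infinitely near base-points, Examples~\ref{Exa:QuinticSpe} and~\ref{Exa:QuinticSpe2}); these are not standard quintics up to $\PGL(3,\r)$, and reducing them to standard ones requires the separate argument of Lemma~\ref{Lem:Quinticgenstd}, which your proposal omits.
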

\begin{thm}[\cite{km1}]\label{Thmkm}
The group $\Aut(Q_{3,1}(\r))$ is generated by $\Aut_\r(Q_{3,1})=\PO(3,1)$  and by standard cubic transformations.
\end{thm}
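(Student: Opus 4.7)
The plan is to adapt the Sarkisov-theoretic proof of Theorem~\ref{thm:BirP2} to the Mori fiber space $Q_{3,1}\to\mathrm{pt}$. Given $\varphi\in\Aut(Q_{3,1}(\r))$, I would decompose $\varphi$ as a sequence of elementary Sarkisov links between real Mori fiber surfaces, using the classification recalled in Section~\ref{Sec:Mori} after \cite{IskFact,bib:Pol}. Writing $G\subset\Bir_\r(Q_{3,1})$ for the subgroup generated by $\PO(3,1)$ and the standard cubic transformations, the target is to show, step by step, that every link occurring in the decomposition lies in $G$.

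The crucial extra ingredient here, compared with the general Theorem~\ref{thm:BirP2}, is that both $\varphi$ and $\varphi^{-1}$ are defined at every real point of $Q_{3,1}$, so all base points of $\varphi$ are non-real and come in complex-conjugate pairs. I would first verify that this ``non-real base locus'' property can be propagated through the Sarkisov decomposition: by always selecting a maximal singularity at a conjugate pair of non-real points, the resulting elementary link has non-real base locus, and the residual birational map is again an automorphism of the real points of the new Mori model. I would then use the Polyakov list to enumerate the possible links $\psi:Q_{3,1}\dashrightarrow S$ with non-real base locus, together with the resulting real Mori fiber surfaces $S$ (some real Del Pezzo or conic-bundle surfaces obtained from $Q_{3,1}$ by blowing up pairs of conjugate non-real points, possibly iterated).

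For each such link $\psi$, the goal is to express it as a product of an element of $\PO(3,1)$ with a standard cubic transformation, possibly followed by further non-real links back to $Q_{3,1}$ handled by the same argument. Concretely, after moving the base locus of $\psi$ by an element of $\PO(3,1)$ to a standard position (exploiting the transitivity of $\PO(3,1)$ on the relevant configurations of conjugate pairs on the sphere), the link is determined by its base locus and degree, which then matches those of a fixed standard cubic transformation defined in Section~\ref{Sec:Mori}. An induction on the length of the Sarkisov decomposition then gives $\varphi\in G$.

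The main obstacle I anticipate is the combinatorial bookkeeping of the two middle steps: checking that the Sarkisov program can indeed be run entirely through non-real centres, and that the finite list of Sarkisov links with non-real base locus starting at $Q_{3,1}$ (or at any surface reached by earlier non-real links) is exhausted, up to $\PO(3,1)$, by standard cubic transformations and their inverses. Verifying the latter amounts to a careful case analysis in the Polyakov table, rather than any new geometric input; once this is settled, the theorem follows from the induction outlined above.
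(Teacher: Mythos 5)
Your overall strategy --- decompose $\varphi$ into Sarkisov links, observe that the absence of real base-points forces every link to be centred at conjugate pairs of imaginary points, and induct on the length of the decomposition --- is exactly the route the paper takes. But two of your intermediate claims do not hold as stated. The lesser one: $\PO(3,1)$ is \emph{not} transitive on configurations of two conjugate pairs of imaginary points of $Q_{3,1}$ (the group has real dimension $6$, while such a configuration depends on $8$ real parameters), so you cannot move the base locus of a link to a ``standard position'' and compare with one fixed cubic map. This is harmless only because the theorem admits \emph{all} standard cubic transformations as generators, so no normalisation is actually needed; but the step as you phrase it would fail.

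The more serious gap: the list of links with non-real base locus reachable from $Q_{3,1}$ is \emph{not} exhausted, up to composition with automorphisms, by standard cubic transformations and their inverses. The links occurring are of type $\I$ ($Q_{3,1}\to\d_6$, blow-up of a conjugate pair), of type $\II$ ($\d_6\dasharrow\d_6$ with imaginary base-points), and of type $\III$ (back to $Q_{3,1}$); a composite $Q_{3,1}\to\d_6\dasharrow\d_6\to Q_{3,1}$ is a standard cubic transformation only when the middle type $\II$ link avoids the two imaginary $(-1)$-sections of $\d_6$. Otherwise it is a \emph{special} cubic transformation with infinitely near base-points (Example~\ref{Exa:CubicSpe}), which is not a priori in your generating set. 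Eliminating these is not ``combinatorial bookkeeping in the Polyakov table rather than new geometric input'': it requires the argument of Lemma~\ref{Lem:Simple}, namely conjugating a special link by standard links centred at general auxiliary pairs of imaginary points so as to move the base-points off the exceptional sections. Once that reduction is available, the paper's induction is essentially the one you outline: if the second link is of type $\III$, the first two links compose to an element of $\Aut_\r(Q_{3,1})$; if it is a standard type $\II$ link, the first three links compose to a standard cubic transformation (Lemma~\ref{Lem:ThreeLinksDegree3}), and one replaces $\varphi$ by $\varphi\psi^{-1}$ with fewer links.
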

\begin{thm}\label{ThmTorus}
The group $\Aut(\F_0(\r))$ is generated by $\Aut_\r(\F_0)=\PGL(2,\r)^2\rtimes \mathbb{Z}/2\mathbb{Z}$  and by the involution
$$
\tau_0\colon ((x_0:x_1),(y_0:y_1))\dasharrow ((x_0:x_1),(x_0y_0+x_1y_1:x_1y_0-x_0y_1)).
$$
\end{thm}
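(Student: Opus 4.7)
The plan is to apply the Sarkisov program recalled in Section~\ref{Sec:Mori}. Let $\varphi \in \Aut(\F_0(\r))$. Since both $\varphi$ and $\varphi^{-1}$ are defined at every real point, every base point of $\varphi$ is non-real and appears in a complex conjugate pair disjoint from $\F_0(\r)$. I factor $\varphi$ as a composition of Sarkisov links between real Mori fiber surfaces, arranged so that every intermediate map has only non-real base points.

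A link with only non-real base points preserves the real topology of the underlying surface, so each intermediate Mori fiber surface in this factorization has real locus homeomorphic to the torus $\sS^1 \times \sS^1$. Inspecting the list of links from~\cite{bib:Pol}, this restricts the possible intermediates to Hirzebruch surfaces $\F_{2n}$, with source and target both $\F_0$. Each link is then a type~II elementary transformation centered at a pair of non-real conjugate points $p, \bar{p}$: blow up $\{p,\bar{p}\}$ on some $\F_{2n}$ and contract the strict transforms of the two conjugate fibers of the Mori fibration through them.

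A direct computation identifies $\tau_0$ with the elementary transformation on $\F_0$ (fibered by the first projection) centered at the conjugate pair $((1:\im),(1:\im))$, $((1:-\im),(1:-\im))$. Since $\Aut_\r(\F_0) = \PGL(2,\r)^2 \rtimes \z/2\z$ acts transitively on pairs of non-real conjugate points of $\F_0$ whose images in each factor of $\p^1 \times \p^1$ are distinct (applying the transitivity of $\PGL(2,\r)$ on pairs of complex conjugate points of $\p^1(\c)$ to each factor), any elementary transformation $\F_0 \dashrightarrow \F_0$ of the type above is of the form $\alpha \circ \tau_0 \circ \beta$ with $\alpha,\beta \in \Aut_\r(\F_0)$, and hence lies in the subgroup generated by $\Aut_\r(\F_0)$ and $\tau_0$.

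The main obstacle is to handle links passing through higher Hirzebruch surfaces $\F_{2n}$ with $n \geq 1$: the real automorphism group of $\F_{2n}$ is much smaller and does not act transitively on general pairs of non-real conjugate points. I would resolve this by induction on the maximal index $n$ appearing in the sequence of intermediate surfaces, showing that a ``peak'' $\F_{2m-2} \dashrightarrow \F_{2m} \dashrightarrow \F_{2m-2}$ in the chain can be replaced, modulo real biregular automorphisms, by a composition of links staying in $\F_{2m-2}$; iterating, the entire factorization reduces to a chain of links among $\F_0$ alone, each already handled by the previous paragraph. Together these steps express $\varphi$ as a word in $\Aut_\r(\F_0)$ and $\tau_0$.
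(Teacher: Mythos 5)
Your proposal follows the paper's proof essentially verbatim: decompose $\varphi$ into Sarkisov links, observe that all base-points are imaginary so the chain stays among the $\F_{2n}$ with only type~II and type~IV links, eliminate the peaks through higher $\F_{2n}$ to reduce to standard links $\F_0\dasharrow\F_0$ (this is exactly the paper's Lemma~\ref{Lem:Simple}), and use the transitivity of $\Aut_\r(\F_0)$ on conjugate imaginary pairs not lying on a common fibre to conjugate each such link to $\tau_0$. The one step you leave as a sketch, the peak-elimination induction, is carried out in detail in Lemma~\ref{Lem:Simple} and works just as you describe.
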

The proof of theorems~\ref{thm:BirP2}, \ref{ThmRongaVust}, \ref{Thmkm}, \ref{ThmTorus} is given in Sections~\ref{BirP2R},~\ref{AutP2R}, \ref{Q31}, \ref{F0}, respectively. Section~\ref{Other} is devoted to present some related recent results on birational geometry of real projective surfaces.

\medskip

In the sequel, surfaces and maps are assumed to be real. In particular if we consider that a real surface is a complex surface endowed with a Galois-action of $G:=\Gal(\c\vert \r)$, a map is $G$-equivariant. On the contrary, points and curves are not assumed to be real a priori.

\section{Mori theory for real rational surfaces and Sarkisov program}\label{Sec:Mori}

We work with the tools of Mori theory. A good reference in dimension $2$, over any perfect field, is \cite{IskFact}.
The theory, applied to smooth projective real rational surfaces, becomes really simple. The description of Sarkisov links between real rational surfaces has been done in \cite{bib:Pol}, together with a study of relations between these links. In order to state this classification, we first recall  the following classical definitions (which can be found in \cite{IskFact}).

 \begin{dfn}
 A smooth projective real rational surface $X$ is said to be \emph{minimal} if any birational morphism $X\to Y$, where $Y$ is another smooth projective real surface, is an isomorphism.\end{dfn}
 
 \begin{dfn}
A \emph{Mori fibration} is a morphism $\pi\colon X\to W$ where $X$ is a smooth projective real rational surface and one of the following occurs
 
 \begin{enumerate}
 \item
 $\rho(X)=1$, $W$ is a point (usually denoted $\{*\}$), and $X$ is a del Pezzo surface;
 \item
 $\rho(X)=2$, $W= \p^1$  and the map $\pi$ is a conic bundle.
 \end{enumerate}
 \end{dfn}
Note that for an arbitrary surface, the curve $W$ in the second case should be any smooth curve, but we restrict ourselves to rational surfaces which implies that $W$ is isomorphic to $\p^1$.
 
 \begin{prop}
 Let $X$ be a smooth projective real rational surface. If $X$ is minimal, then it admits a morphism $\pi\colon X\to W$ which is a Mori fibration.
 \end{prop}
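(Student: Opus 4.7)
The plan is to run the $G$-equivariant Minimal Model Program on $X$, where $G=\Gal(\c\vert\r)$, and read off that minimality forces the terminal contraction to be of fiber type. Since $X$ is (geometrically) rational, its Kodaira dimension is $-\infty$, so $K_X$ is not pseudo-effective; equivalently, there is a $G$-invariant curve class on which $-K_X$ is positive. Applying the Cone Theorem in the $G$-equivariant setting (\cite{IskFact}, where everything is stated over an arbitrary perfect field and hence applies to $X$ viewed together with its Galois action) to the cone $\overline{NE}(X)^G$ of $G$-invariant effective $1$-cycles, I obtain a $K_X$-negative extremal ray $R\subset\overline{NE}(X)^G$, together with its contraction morphism $\pi\colon X\to W$.

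Now I perform a case analysis on $\dim W\in\{0,1,2\}$. If $\dim W=2$, then $\pi$ is birational; the classification of such contractions on smooth surfaces (in the equivariant setting) shows that $\pi$ contracts precisely one $G$-orbit of pairwise disjoint $(-1)$-curves, and that $W$ is smooth. Since $X$ is minimal, this birational morphism $\pi\colon X\to W$ must be an isomorphism, which contradicts $K_X\cdot R<0$. Therefore $\dim W<2$ and $\pi$ is of fiber type.

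If $\dim W=0$, then $W$ is a point, so all $G$-invariant numerical classes are proportional and $\rho(X)=1$; since the unique ray of $\overline{NE}(X)^G$ is $K_X$-negative, $-K_X$ is ample by Kleiman's criterion, so $X$ is a del Pezzo surface and we are in case~(1). If $\dim W=1$, then $W$ is a smooth projective curve; since $X$ is rational and $\pi$ is surjective with rational general fiber, $W$ is rational, hence $W\cong\p^1$. A general fiber of $\pi$ is a smooth rational curve (being irreducible and with $-K_X$ positive on it), so $\pi$ is a conic bundle; as the contraction of an extremal ray, $\pi$ has relative Picard rank $1$, giving $\rho(X)=\rho(\p^1)+1=2$. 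This is case~(2).

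The only real subtlety is ensuring that the Cone and Contraction Theorems are available in the $G$-equivariant framework for smooth projective real surfaces and that the contraction of a $K_X$-negative divisorial extremal ray on a smooth surface really yields a smooth target with exactly one $G$-orbit of $(-1)$-curves contracted; but both are part of the standard two-dimensional MMP over perfect fields as developed in \cite{IskFact} and specialized to the real setting in \cite{bib:Pol}, so no new work is required here beyond invoking those references.
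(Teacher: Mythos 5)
Your argument is correct and is essentially the same as the paper's, which simply cites \cite{bib:IskMinimal} and \cite{bib:Mor} for exactly this two-dimensional $G$-equivariant (equivalently, perfect-field) MMP argument; you have just unpacked the content of those references. The only point worth keeping explicit is that $W\cong\p^1$ in the fiber case uses $X(\r)\neq\emptyset$, which is implicit in the paper's convention that a real rational surface is $\r$-rational.
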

 \begin{proof}
 Follows from \cite{bib:IskMinimal}. See also \cite{bib:Mor}.
 \end{proof}
 
 \begin{defn}
 A \emph{Sarkisov link} between two Mori fibrations $\pi_1\colon X_1\to W_1$ and $\pi_2\colon X_2\to W_2$ is a birational map $\varphi\colon X_1\dasharrow X_2$ of one of the following four types, where each of the diagrams is commutative:
 
 \begin{enumerate}
 \item {\sc Link of Type $\I$} 
$$\xymatrix{
X_1\ar[d]_{\pi_1}\ar@{-->}[r]_\varphi& X_2\ar[d]_{\pi_2}\\
\{*\}=W_1& \ar[l]_{\tau} W_2=\p^1
}$$
where $\varphi^{-1}\colon X_2\to X_1$ is a birational morphism, which is the blow-up of either a real point or two imaginary conjugate points of $X_1$, and where $\tau$ is the contraction of $W_2=\p^1$ to the point $W_1$.

 \item {\sc Link of Type $\II$}
$$\xymatrix{
X_1\ar@/_0.8pc/@{-->}[rr]_{\varphi}\ar[d]_{\pi_1}& Z\ar[l]_{\sigma_1}\ar[r]^{\sigma_2}& X_2\ar[d]_{\pi_2}\\
W_1\ar[rr]_{\tau}^{\simeq}&& W_2
}$$
where $\sigma_i\colon Z\to X_i$ is a birational morphism, which is the blow-up of either a real point  or two imaginary conjugate points of $X_i$, and where $\tau$ is an isomorphism between $W_1$ and $W_2$.
 \item {\sc Link of Type $\III$} 
$$\xymatrix{
X_1\ar[r]_{\varphi}\ar[d]_{\pi_1}& X_2\ar[d]_{\pi_2}\\
\p^1=W_1\ar[r]_{\tau}&  W_2=\{*\}
}$$
where $\varphi\colon X_1\to X_2$ is a birational morphism, which is the blow-up of either a real point or two imaginary conjugate points of $X_2$, and where $\tau$ is the contraction of $W_1=\p^1$ to the point $W_2$. (\emph{It is the inverse of a link of type~$\mathrm{I}$}.)
 \item {\sc Link of Type $\IV$} 
$$\xymatrix{
X_1\ar[r]_{\varphi}^{\simeq}\ar[d]_{\pi_1}& X_2\ar[d]_{\pi_2}\\
\p^1=W_1&  W_2=\p^1
}$$
where $\varphi\colon X_1\to X_2$ is an isomorphism and $\pi_1,\pi_2\circ \varphi$ are conic bundles on $X_1$ with distinct fibres.
 \end{enumerate}
 \end{defn}
 Note that the morphism $\tau$ is important only for links of type $\mathrm{II}$, between two surfaces with a Picard group of rank $2$ (in higher dimension $\tau$ is important also for other links).\\
 
 \begin{defn}
 If $\pi\colon X\to W$ and $\pi'\colon X'\to W'$ are two (Mori) fibrations, an isomorphism $\psi\colon X\to X'$ is called an \emph{isomorphism of fibrations} if there exists an isomorphism $\tau\colon W\to W'$ such that $\pi'\psi=\tau\pi$.
 \end{defn}
 Note that the composition $\alpha\varphi\beta$ of a Sarkisov link $\varphi$ with some automorphisms of fibrations $\alpha$ and $\beta$ is again a Sarkisov link. 
  We have the following fundamental result:
 \begin{prop}\label{Prop:Sarkisov}
 If $\pi\colon X\to W$ and $\pi'\colon X'\to W'$ are two Mori fibrations, then any birational map $\psi\colon X\dasharrow X'$ is either an isomorphism of fibrations or decomposes into Sarkisov links.
 \end{prop}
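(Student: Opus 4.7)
The plan is to follow the Sarkisov program for surfaces over a perfect field, as developed by Iskovskikh in \cite{IskFact} and specialised to the real case in \cite{bib:Pol}. The strategy is an induction on a suitably defined complexity of $\psi$. First, I would resolve $\psi$ by a smooth projective real surface $Z$ admitting $G$-equivariant birational morphisms $\sigma\colon Z\to X$ and $\sigma'\colon Z\to X'$, each a finite sequence of blow-ups of real points or pairs of imaginary conjugate points. I pick a very ample $G$-invariant linear system $\mathcal{H}'$ on $X'$ and write $\mathcal{H}=\psi^{-1}_*\mathcal{H}'$ for its strict transform on $X$. To the pair $(\pi\colon X\to W,\mathcal{H})$ I attach a \emph{Sarkisov degree}, a triple recording the maximal multiplicity of the base points of $\mathcal{H}$, the canonical threshold $\mu(\mathcal{H},K_X)$, and a discrete combinatorial invariant.

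Next, I invoke a Noether--Fano type inequality: either $\psi$ is an isomorphism of Mori fibrations and we are done, or the linear system $\mathcal{H}$ admits a base point, or an extremal class in $N_1(X/W)$, whose multiplicity exceeds the threshold imposed by $K_X+\mu\mathcal{H}$. In the latter case one extracts from this \emph{worst} locus a canonically associated Sarkisov link $\chi\colon X\dashrightarrow X_1$: blowing up the maximal-multiplicity base point to obtain a conic bundle (type $\I$), performing an elementary transformation inside a singular fibre (type $\II$), contracting back to a del Pezzo (type $\III$), or exchanging one conic bundle structure for another on a del Pezzo surface of Picard rank two (type $\IV$). The classification recalled in Section~\ref{Sec:Mori} guarantees that such a $\chi$ exists in the real rational setting, since by construction every centre is a real point or a pair of imaginary conjugate points, hence automatically $G$-invariant.

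Composing, one sets $\psi_1:=\psi\circ\chi^{-1}\colon X_1\dashrightarrow X'$, and the crux of the argument is that the Sarkisov degree of $\psi_1$ is strictly smaller than that of $\psi$. Iterating finitely many times, one reaches a birational map which is an isomorphism of Mori fibrations, and reading the sequence of intermediate links backwards yields the desired decomposition of $\psi$. The main obstacle I expect is precisely this strict decrease of the complexity, which requires the Noether--Fano inequality together with a careful case-by-case bookkeeping of how $K_X$ and the base-point multiplicities of $\mathcal{H}$ transform through each of the four link types. Since all centres are $G$-invariant, the bookkeeping is identical to the one over $\c$ performed in \cite{IskFact}, and the termination of the induction is the content of the analysis of \cite{bib:Pol}, which it therefore suffices to invoke.
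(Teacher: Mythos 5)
Your proposal is a correct outline of the standard Sarkisov program argument (resolution, Sarkisov degree, Noether--Fano inequality, untwisting by links, termination), which is exactly the content of the references the paper invokes: its own proof is simply ``Follows from \cite{IskFact}. See also \cite{bib:Corti}.'' Since you too ultimately defer the strict decrease and termination to \cite{IskFact} and \cite{bib:Pol}, your argument is essentially the same as the paper's, just with the machinery of the cited proof made explicit.
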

 \begin{proof}Follows from \cite{IskFact}. See also \cite{bib:Corti}.\end{proof}
 
 \begin{thm}[\cite{Com1} (see also \cite{bib:IskMinimal})]\label{thm.mini}
Let $X$ be a real rational surface, if $X$ is minimal, then it is isomorphic to one of the following:
  \begin{enumerate}
  \item
   $\p^2$, 
  \item the quadric  $Q_{3,1}=\{(w:x:y:z)\in \p^3\ |\ w^2=x^2+y^2+z^2\}$, 
  \item a Hirzebruch surface $\mathbb{F}_n=\{((x:y:z),(u:v))\in \p^2 \times \p^1\ |\ yv^n=zu^n\}$ with $n\ne 1$.   \end{enumerate}
  \end{thm}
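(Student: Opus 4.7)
My plan is to invoke the Mori fibration proposition proved just above: a minimal real rational surface $X$ admits a Mori fibration $\pi\colon X\to W$, so either $\rho(X)=1$ and $X$ is a del Pezzo surface, or $\rho(X)=2$ and $\pi\colon X\to\p^1$ is a conic bundle. I treat the two cases separately.

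In the conic bundle case, suppose $\pi$ has a singular fiber $f$. Over $\c$, $f$ is the union of two $(-1)$-curves meeting transversally at a point. Either both components are defined over $\r$, in which case blowing down one of them produces a smooth real surface; or the two components are exchanged by Galois, in which case they meet at a real point and their union is the exceptional locus of a real blow-up of a pair of conjugate points, which may be contracted to a smooth real surface. Either way minimality fails. Hence $\pi$ has no singular fibers, so $\pi$ is a real $\p^1$-bundle over $\p^1$, and therefore $X\cong\F_n$ for some $n\geq 0$. The value $n=1$ is excluded since the unique exceptional section of $\F_1$ is a real $(-1)$-curve.

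In the del Pezzo case with $\rho_\r(X)=1$, let $d=K_X^2\in\{1,\ldots,9\}$. If $d=9$, then $X_\c\cong\p^2_\c$ and the only real form is $\p^2$ itself since $\mathrm{Br}(\r)=\z/2$ has no $3$-torsion. If $d=8$, then $X_\c\cong\F_1$ or $\p^1\times\p^1$; the first is excluded because the unique $(-1)$-curve on $\F_1$ is Galois-invariant, forcing $\rho_\r(X)=2$, while in the second case $\rho_\r(X)=1$ forces Galois to exchange the two rulings, so $X$ is a smooth real quadric in $\p^3$ of signature $(4,0)$ or $(3,1)$. The anisotropic form of signature $(4,0)$ has empty real locus and is not $\r$-rational, so $X\cong Q_{3,1}$. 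Finally, for $d\leq 7$ the $(-1)$-curves on $X_\c$ form a rich Galois-stable configuration inside a root system of type $E_{9-d}$, and a case-by-case inspection yields in each degree either a real $(-1)$-curve or a disjoint Galois-conjugate pair of $(-1)$-curves, producing a Galois-equivariant contraction that contradicts minimality.

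The principal obstacle is the subcase $d\leq 7$: checking uniformly that a Galois-invariant contractible configuration of $(-1)$-curves exists requires either a separate analysis in each degree, based on the intersection combinatorics of the $(-1)$-curves, or an appeal to the classification of possible Galois actions through the corresponding Weyl groups. A secondary subtlety appears in the $d=8$ discussion, where one uses the classical fact that a smooth projective $\r$-rational surface has non-empty real locus to rule out the anisotropic quadric.
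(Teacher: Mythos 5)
The paper offers no proof of this theorem --- it is quoted from Comessatti and Iskovskikh --- so your argument must stand on its own, and it has two genuine gaps, both coming from the fact that after the reduction to a Mori fibration you essentially stop using the hypothesis that $X$ is rational \emph{over} $\r$. In the conic bundle case, the claim that two Galois-conjugate components of a singular fibre form ``the exceptional locus of a real blow-up of a pair of conjugate points'' is false: the blow-up of two conjugate points produces two \emph{disjoint} conjugate $(-1)$-curves, whereas the two components of a singular fibre meet (at a real point), and contracting two intersecting $(-1)$-curves does not yield a smooth surface. Consequently such a fibre does not contradict minimality, and indeed minimal real conic bundles with $2k\ge 4$ singular fibres of exactly this type exist; they are excluded from the statement only because they are not $\r$-rational (e.g.\ $X(\r)$ is disconnected), while the case of $2$ such fibres is the surface $\d_6$, which fails to be minimal because of its two disjoint conjugate \emph{sections}, not because of anything inside the fibres. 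Your argument as written would ``prove'' that none of these surfaces exist.

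The second gap is the del Pezzo case $d\le 7$. The ``case-by-case inspection'' is not carried out, and the conclusion you want it to yield is false in that generality: there exist real del Pezzo surfaces of degree $1,2,3,4$ with $\rho_\r(X)=1$, hence with no real $(-1)$-curve and no Galois-stable set of pairwise disjoint $(-1)$-curves. These surfaces are minimal; they are absent from the list not because minimality fails but because a minimal real del Pezzo surface of degree $\le 4$ is not $\r$-rational (Iskovskikh's rationality criterion, or a unirationality/intermediate invariant argument). So for $d\le 4$ the purely combinatorial contradiction you invoke is unavailable and the rationality hypothesis must be fed back in --- which is precisely the nontrivial content of the cited references. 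The parts that do work are the reduction to a Mori fibration, the cases $d=9,8,7$ (including the use of $X(\r)\ne\emptyset$ to discard the anisotropic quadric), and the exclusion of $\F_1$.
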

  
By \cite{Ma06}, if $n-n'\equiv 0 \mod 2$,  $\mathbb{F}_n(\r)$ is isomorphic to $\mathbb{F}_{n'}(\r)$, we get:
  
  \begin{cor}\label{cor.mini}
Let $X(\r)$ be the real locus of a real rational surface. If $X$ is minimal, then $X(\r)$ is  isomorphic to one of the following:
  \begin{enumerate}
  \item
   $\p^2(\r)$, 
  \item $Q_{3,1}(\r)\sim \sS^2$, 
  \item $\F_0(\r)\sim \sS^1\times\sS^1$.
  \end{enumerate}
  \end{cor}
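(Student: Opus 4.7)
The plan is a direct reduction using Theorem~\ref{thm.mini} and the result of \cite{Ma06} recalled immediately before the corollary. By Theorem~\ref{thm.mini}, a minimal real rational surface $X$ is isomorphic to $\p^2$, to $Q_{3,1}$, or to a Hirzebruch surface $\F_n$ with $n\ne 1$. The first two cases are immediate: $X(\r)=\p^2(\r)$ by definition, and for $X=Q_{3,1}$ the equation $w^2=x^2+y^2+z^2$ realises $Q_{3,1}(\r)$ on the affine chart $w=1$ as the unit $2$-sphere in $\r^3$, hence homeomorphic to $\sS^2$.

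For $X\cong\F_n$ I would apply the isomorphism $\F_n(\r)\cong\F_{n'}(\r)$, valid whenever $n\equiv n'\pmod 2$, from \cite{Ma06}. For any even $n$ this immediately yields $\F_n(\r)\cong\F_0(\r)=\p^1(\r)\times\p^1(\r)\sim\sS^1\times\sS^1$, as required. The underlying geometric mechanism is an iterated elementary transformation: blow up a pair of complex conjugate non-real points lying on the exceptional section of $\F_n$ and then contract the strict transforms of the two corresponding fibres; each such modification is biregular over $\r$ though not over $\c$, and it decreases the Hirzebruch index by two, eventually arriving at $\F_0$.

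The one point deserving care is the case of odd $n\ge 3$. The parity-based reduction of \cite{Ma06} would identify $\F_n(\r)$ only with $\F_1(\r)$, which is topologically a Klein bottle and hence does not appear among the three types listed. Either Theorem~\ref{thm.mini} should be read as implicitly ruling out such odd $n$ under real minimality, or a separate argument is needed; this is the step where I would expect to need the most care in writing out a full argument.
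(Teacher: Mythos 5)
Your reduction, and your treatment of $\p^2$, $Q_{3,1}$ and the even-index Hirzebruch surfaces, coincide exactly with the paper's own justification, which consists entirely of the sentence preceding the corollary: Theorem~\ref{thm.mini} together with the parity statement $\F_n(\r)\cong\F_{n'}(\r)$ for $n\equiv n'\pmod 2$ from \cite{Ma06}. (One small slip in your parenthetical ``geometric mechanism'': blowing up a conjugate pair \emph{on} the exceptional section and contracting the fibres sends $\F_n$ to $\F_{n+2}$, as in Example~\ref{Exa:Sarkisov}(5); to descend from $\F_n$ to $\F_{n-2}$ one must blow up a pair \emph{off} the exceptional section.) The substantive issue is the one you flag in your last paragraph, and you are right to be suspicious: the difficulty is genuine, and it cannot be resolved in the way you tentatively suggest. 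Theorem~\ref{thm.mini} does \emph{not} implicitly exclude odd $n\ge 3$: the surface $\F_3$ contains no $(-1)$-curves at all (its only negative curve is the section of self-intersection $-3$), so it is minimal even over $\c$ and a fortiori in the sense of Definition~2.2; and it is $\r$-rational, since elementary transformations centred at real points give a birational map $\F_3\dashrightarrow\F_2\dashrightarrow\F_1\to\p^2$ defined over $\r$. Meanwhile $\F_3(\r)\cong\F_1(\r)$ is a Klein bottle (a tubular neighbourhood of the real locus of the $(-3)$-section is a M\"obius band, its normal bundle having odd degree), which is homeomorphic to none of $\p^2(\r)$, $\sS^2$, $\sS^1\times\sS^1$. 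Since an isomorphism of real loci is in particular a homeomorphism, no argument can place $\F_3(\r)$ among the three listed types.

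So the gap you identified is not closable as the statement stands: the conclusion requires either a fourth item, namely $\F_1(\r)$ (the Klein bottle, realised by the minimal surfaces $\F_{2k+1}$ for $k\ge 1$), or a different reading of ``minimal'' (e.g.\ minimality of the real locus $X(\r)$, which does exclude $\F_{2k+1}(\r)\cong\F_1(\r)$ because the blow-down $\F_1\to\p^2$ is regular on real points without being an isomorphism of real loci). Your write-up is exactly as complete as the paper's one-line derivation, and your diagnosis of where the argument breaks is correct; the only thing missing is the recognition that the obstruction lies in the statement rather than in your argument. Note that this does not affect the rest of the paper, whose main theorems concern only $\p^2$, $Q_{3,1}$ and $\F_0$.
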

 
We  give a list of Mori fibrations on real rational surfaces, and will show that, up to isomorphisms of fibrations, this list is exhaustive.
 \begin{example}\label{Exa:MoriFib}
 The following morphisms $\pi\colon X\to W$ are Mori fibrations on the plane, the sphere, the Hirzebruch surfaces, and a particular Del Pezzo surface of degree $6$.
 \begin{enumerate}
 \item
 $\p^2\to\{*\}$;
 \item
 $Q_{3,1}=\{(w:x:y:z)\in \p^3_\r\ |\ w^2=x^2+y^2+z^2\}\to\{*\}$;
 \item
 $\mathbb{F}_n=\{((x:y:z),(u:v))\in \p^2 \times \p^1\ |\ yv^n=zu^n\}\to \p^1$ for $n\ge 0$  (the map is the projection on the second factor and $\mathbb{F}_n$ is the $n$-th Hirzebruch surface);
 \item
 $\d_6=\{(w:x:y:z),(u:v)\in Q_{3,1}\times \p^1\ |\ wv=xu\}\to \p^1$ (the map is the projection on the second factor).
 \end{enumerate}
 \end{example}
 
 \begin{example}\label{Exa:Sarkisov}
 The following maps between the surfaces of Example~\ref{Exa:MoriFib} are Sarkisov links:
 \begin{enumerate}
 \item
The contraction of the exceptional curve of $\mathbb{F}_1$ (or equivalently the blow-up of a real point of $\p^2$),  is a link $\mathbb{F}_1\to \p^2$  of type $\III$. Note that the converse of this link is of type $\I$.
 \item
 The stereographic projection from the North pole $p_N=(1:0:0:1)$, $\varphi\colon Q_{3,1}\dasharrow \p^2$ given by $\varphi\colon (w:x:y:z)\dasharrow (x:y:w-z)$ and its inverse $\varphi^{-1}\colon \p^2\dasharrow Q_{3,1}$ given by $\varphi^{-1}\colon (x:y:z)\dasharrow (x^2+y^2+z^2:2xz:2yz:x^2+y^2-z^2)$ are both Sarkisov links of type $\II$.
 
 The map $\varphi$ decomposes into the blow-up of $p_N$, followed by the contraction of the strict transform of the curve $z=w$ (intersection of $Q_{3,1}$ with the tangent plane at $p_N$), which is the union of two imaginary conjugate lines. The map $\varphi^{-1}$ decomposes into the blow-up of the two imaginary points $(1:\pm \im :0)$, followed by the contraction of the strict transform of the line $z=0$.
 
 \item
The projection on the first factor $\d_6\to Q_{3,1}$ which contracts the two disjoint conjugate imaginary $(-1)$-curves $(0:0:1:\pm \im)\times \p^1\subset \d_6$ onto the two conjugate imaginary points $(0:0:1:\pm \im)\in Q_{3,1}$ is a link of type $\III$.
 \item
 The blow-up of a real point  $q\in \mathbb{F}_n$, lying on the exceptional section if $n>0$ (or any point if $n=0$), followed by the contraction of the strict transform of the fibre passing through $q$ onto a real point of $\mathbb{F}_{n+1}$ not lying on the exceptional section  is  a link $\mathbb{F}_n\dasharrow \mathbb{F}_{n+1}$ of type $\II$.
 \item
The blow-up of two conjugate imaginary points $p,\bar{p}\in \mathbb{F}_n$ lying on the exceptional section if $n>0$, or on the same section of self-intersection $0$ if $n=0$, followed by the contraction of the strict transform of the fibres passing through $p,\bar{p}$ onto two imaginary conjugate points of $\mathbb{F}_{n+2}$ not lying on the exceptional section  is a link $\mathbb{F}_n\dasharrow \mathbb{F}_{n+2}$ of type $\II$.
 \item
The blow-up of two conjugate imaginary points $p,\bar{p}\in \mathbb{F}_n$, $n\in\{0,1\}$ not lying on the same fibre  (or equivalently not lying on a real fibre) and  not on the same section of self-intersection $-n$ (or equivalently not lying on a real section of self-intersection $-n$), followed by the contraction 
 of the fibres passing through $p,\bar{p}$ onto two imaginary conjugate points of $\mathbb{F}_{n}$ having the same properties  is a link $\mathbb{F}_n\dasharrow \mathbb{F}_{n}$ of type $\II$.
 \item
The exchange of the two components $\p^1\times \p^1\to \p^1\times\p^1$ is a link $\mathbb{F}_0\to \mathbb{F}_0$ of type $\IV$.
\item
The blow-up of a real point $p\in\d_6$, not lying on a singular fibre (or equivalently $p\not=\left((1:1:0:0),(1:1)\right)$, $p\not=\left((1:-1:0:0),(1:-1)\right)$), followed by the contraction of the strict transform of the  fibre passing through $p$ onto a real point of $\d_6$,  is a link $\d_6\dasharrow \d_6$ of type $\II$. 
\item
The blow-up of two imaginary conjugate points $p,\bar{p}\in \d_6$, not lying on the same fibre (or equivalently not lying on a real fibre), followed by the contraction of the strict transform of the  fibres passing through $p,\bar{p}$ onto two imaginary points of $\d_6$ is a link $\d_6\dasharrow \d_6$ of type $\II$.  
 \end{enumerate}
 \end{example}
 \begin{rem}
Note that in the above list, the three links  $\mathbb{F}_n\dasharrow \mathbb{F}_m$ of type $\II$ can be put in one family,  and the same is true for the two links $\d_6\dasharrow \d_6$. We distinguished here the possibilities for the base points to describe more precisely the geometry of each link. The two links $\d_6\dasharrow \d_6$ could also be arranged into extra families, by looking if the base points belong to the two exceptional sections of self-intersection $-1$, but go in any case from $\d_6$ to $\d_6$.\end{rem}
 
 \begin{prop}
 Any Mori fibration $\pi \colon X\to W$, where $X$ is a smooth projective real rational surface, belongs to the list of Example~$\ref{Exa:MoriFib}$.
 
 Any Sarkisov link between two such Mori fibrations is equal to $\alpha\varphi\beta$, where $\varphi$ or $\varphi^{-1}$ belongs to the list described in Example~$\ref{Exa:Sarkisov}$ and where $\alpha$ and $\beta$ are isomorphisms of fibrations. 
 \end{prop}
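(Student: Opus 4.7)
I treat the two assertions separately, relying on the classification of minimal real rational surfaces (Theorem~\ref{thm.mini}) and on the fact that blowing up a real point or a conjugate pair of imaginary points raises the real Picard number by exactly one.

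The first assertion splits according to $\dim W$. If $W=\{*\}$, then $\rho(X)=1$, which forces $X$ to be minimal: any non-trivial birational morphism contracts a real $(-1)$-curve or a disjoint pair of conjugate $(-1)$-curves, strictly lowering $\rho$. Theorem~\ref{thm.mini} then identifies $X$ with $\p^2$ or $Q_{3,1}$, since the Hirzebruch surfaces have $\rho=2$. If $W=\p^1$, then $\rho(X)=2$. If $X$ is minimal, Theorem~\ref{thm.mini} yields $X\cong\F_n$ for some $n\ne 1$. Otherwise there is a birational morphism $\eta\colon X\to Y$ with $\rho(Y)=1$, hence $Y\in\{\p^2,Q_{3,1}\}$ by the first case, and $X$ is the blow-up of one of these at either a real point or a pair of conjugate imaginary points. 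Up to $\r$-isomorphism this produces three surfaces: $\F_1$ (from $\p^2$ at a real point), $\d_6$ (from $Q_{3,1}$ at a conjugate pair, by definition), and a del Pezzo $Y_1$ of degree $7$ arising both from $\p^2$ at a conjugate pair (contract the real strict transform of the line joining the two imaginary points to recover $Q_{3,1}$) and from $Q_{3,1}$ at a real point. A Picard-group computation rules out $Y_1$: with basis $(\tilde H,E)$ satisfying $\tilde H^2=2$, $E^2=-1$, $\tilde H\cdot E=0$, the equation $(a\tilde H+bE)^2=0$ reduces to $2a^2=b^2$, which has no non-zero integer solution, so $Y_1$ admits no class of self-intersection zero and hence no conic bundle structure.

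For the second assertion, I inspect each Sarkisov-link type. Types~I and~III involve a single blow-up between a rank-one Mori fibration and a rank-two conic bundle; by the analysis just performed only the two blow-ups $\p^2\leftrightarrow\F_1$ and $Q_{3,1}\leftrightarrow\d_6$ give valid links, namely (1) and (3) of Example~\ref{Exa:Sarkisov}. A type~IV link is an isomorphism exchanging two distinct conic-bundle structures; among the surfaces of Example~\ref{Exa:MoriFib} only $\F_0=\p^1\times\p^1$ carries two such structures (the two projections), giving link~(7). A type~II link preserves the base: for $W=\{*\}$ the intermediate surface $Z$ must be $Y_1$, which admits precisely two contractions---to $\p^2$ via a pair of conjugate $(-1)$-curves and to $Q_{3,1}$ via a real $(-1)$-curve---yielding the stereographic projection (2); for $W=\p^1$ I enumerate systematically the possible centres (real point or conjugate pair; on the exceptional section or not; on the same fibre or not) and match the outcomes to (4)--(6) for $X_1=\F_n$ and (8)--(9) for $X_1=\d_6$, up to pre- and post-composition with isomorphisms of fibrations.

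The main obstacle is the Picard-group computation excluding $Y_1$ as the source of a conic bundle; once this is settled, the rest of the argument reduces to a finite enumeration governed by the behaviour of the real Picard rank under blow-up.
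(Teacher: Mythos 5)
Your argument is correct, but it reaches the conclusion by a genuinely different route from the paper. The paper never classifies the Mori fibrations directly: since every rational $X$ carrying a Mori fibration is connected to $\p^2\to\{*\}$ by a chain of Sarkisov links (Proposition~\ref{Prop:Sarkisov}), both assertions follow from the single claim that any link \emph{starting} from a fibration of Example~\ref{Exa:MoriFib} lands back in that list, and the proof is then the same case-by-case inspection of link types that you carry out. You instead prove the first assertion head-on, using the behaviour of $\rho$ under blow-up together with Theorem~\ref{thm.mini}; this is self-contained (no appeal to the decomposition theorem) but obliges you to identify and eliminate the degree-$7$ del Pezzo $Y_1$. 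That elimination --- $\Pic(Y_1)^G$ contains no non-zero isotropic class because $2a^2=b^2$ has no non-trivial integer solution, hence $Y_1$ admits no conic bundle structure --- is precisely the point the paper passes over silently when it declares that the two remaining blow-ups ``correspond to the stereographic projection'': it is what forces the blow-up of $\p^2$ at an imaginary pair, or of $Q_{3,1}$ at a real point, to occur only inside a link of type $\mathrm{II}$, and what makes $Y_1$ the unique possible intermediate surface of a type $\mathrm{II}$ link over a point. The one ingredient you leave implicit and the paper states explicitly is the homogeneity of $\p^2$ and $Q_{3,1}$ under their automorphism groups (acting on real points, resp.\ on conjugate pairs of imaginary points), which is what collapses each family of links to a single $\varphi$ up to the isomorphisms of fibrations $\alpha$ and $\beta$; your closing ``up to pre- and post-composition with isomorphisms of fibrations'' should be read as appealing to this.
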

 \begin{proof}
Since any birational map between two surfaces with Mori fibrations decomposes into Sarkisov links and that all links of Example~$\ref{Exa:Sarkisov}$ involve only the Mori fibrations of Example~$\ref{Exa:MoriFib}$, it suffices to check that any link starting from one of the Mori fibrations of~$\ref{Exa:MoriFib}$ belongs to the list~$\ref{Exa:Sarkisov}$. This is an easy case-by-case study; here are the steps.
 
Starting from a Mori fibration $\pi \colon X\to W$ where $W$ is a point, the only links we can perform are links of type $\mathrm{I}$ or $\mathrm{II}$ centered at a real point or two conjugate imaginary points. From \ref{thm.mini}, the surface $X$ is either $Q_{3,1}$ or $\p^2$, and both are homogeneous under the action of $\Aut(X)$, so the choice of the point is not relevant. Blowing-up a real point in $\p^2$  or two imaginary points in $Q_{3,1}$ gives rise to a link of type~$\mathrm{I}$ to $\mathbb{F}_1$ or $\d_6$. The remaining cases correspond to the stereographic projection $Q_{3,1}\dasharrow \p^2$ and its converse.

 Starting from a Mori fibration $\pi \colon X\to W$ where $W=\p^1$, we have additional possibilities. If the link is of type $\mathrm{IV}$, then $X$ admits two conic bundle structures and by \ref{thm.mini}, the only possibility is $\mathbb{F}_0=\p^1 \times \p^1$. If the link is of type $\mathrm{III}$, then we contract a real $(-1)$-curve of $X$ or two disjoint conjugate imaginary $(-1)$-curves. The only possibilities for $X$ are respectively $\mathbb{F}_1$ and $\d_6$, and the image is respectively $\p^2$ and $Q_{3,1}$ (these are the inverses of the links described before). The last possibility is to perform a link a type $\mathrm{II}$, by blowing up a real point or two conjugate imaginary points, on respectively one or two smooth fibres, and to contract the strict transform. We go from $\d_6$ to $\d_6$ or from $\mathbb{F}_m$ to $\mathbb{F}_{m'}$ where $m'-m\in \{-2,-1,-0,1,1\}$. All possibilities are described in Example~\ref{Exa:Sarkisov}.
 \end{proof}
 
 We end this section by reducing the number of links of type $\mathrm{II}$ needed for the classification. For this, we introduce the notion of standard links.
  \begin{defn}
 The following links of type $\mathrm{II}$ are called \emph{standard}:
 
 \begin{enumerate}
 \item
 links $\F_m\dasharrow \F_n$, with $m,n\in \{0,1\}$;
 \item
 links $\d_6\dasharrow \d_6$ which do not blow-up any point on the two exceptional section of self-intersection $-1$.
 \end{enumerate}
 The other links of type $\mathrm{II}$ will be called \emph{special}.
 \end{defn}
 
 The following result allows us to simplify the set of generators of our groups.
 
 \begin{lem}\label{Lem:4is123}
 Any Sarkisov link of type $\mathrm{IV}$ decomposes into links of type $\mathrm{I}$, $\mathrm{III}$, and standard links of type $\mathrm{II}$.
 \end{lem}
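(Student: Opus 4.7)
The plan is to reduce to a single case and handle it explicitly by a round trip through $\p^2$. By Example~\ref{Exa:Sarkisov}~(7) together with the classification in Proposition~\ref{Prop:Sarkisov}, every link of type~$\IV$ is equal, up to composition with isomorphisms of fibrations, to the identity map $\iota$ of $\F_0 = \p^1 \times \p^1$ regarded as a link $(\F_0, \pi_1) \to (\F_0, \pi_2)$, where $\pi_1, \pi_2$ are the two projections. It therefore suffices to write $\iota$ as a composition of links of the allowed types.

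Choose a real point $p \in \F_0$ and let $f_1, f_2$ be the fibers through $p$ for $\pi_1, \pi_2$. The blow-up $Z$ of $\F_0$ at $p$, with exceptional divisor $E$, carries three $(-1)$-curves $E$, $\tilde f_1, \tilde f_2$, and a standard computation identifies $Z$ with the blow-up of $\p^2$ at two real points $q_1, q_2$, so that $\tilde f_i = E_{q_i}$ and $E$ is the strict transform of the line $q_1 q_2$.

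The decomposition has four steps. The first is the standard type~$\II$ link $g_1\colon (\F_0, \pi_1) \dasharrow (\F_1, \pi')$ blowing up $p$ and contracting $\tilde f_1$, followed by the type~$\III$ link $g_2\colon (\F_1, \pi') \to (\p^2, \{*\})$ contracting the $(-1)$-section (the image of $\tilde f_2$). The third step is the type~$\I$ link $g_3\colon (\p^2, \{*\}) \dasharrow (\F_1, \pi'')$ blowing up $q_1$ (and not $q_2$), and the fourth is the standard type~$\II$ link $g_4\colon (\F_1, \pi'') \dasharrow (\F_0, \pi_2)$ blowing up the image of $q_2$ and contracting the $\pi''$-fiber through it. As birational maps of $\F_0$, the composition $g_4 \circ g_3 \circ g_2 \circ g_1$ is the identity.

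The content of the lemma is to check that this composition realizes $\iota$ rather than the trivial self-composition of a link with its inverse. The key point is that the conic bundle structure $\pi''$ on the $\F_1$ produced by $g_3$ corresponds, via the identification $Z = \mathrm{Bl}_p \F_0 = \mathrm{Bl}_{q_1, q_2}\p^2$, to the pull-back of $\pi_2$ and not of $\pi_1$. A direct Picard-group computation on $Z$ gives that the $\pi_2$-fiber class equals $H - E_{q_1}$, so $\pi_2$-fibers of $\F_0$ project to lines through $q_1$ in $\p^2$, which are exactly the fibers of $\pi''$. This Picard-group bookkeeping is the main technical step; once done, the source and target fibrations match at each stage and the decomposition is complete.
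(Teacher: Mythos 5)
Your proof is correct and follows essentially the same route as the paper: both reduce the type $\IV$ link to the single model on $\F_0$ and factor it through $\p^2$ via the standard map that blows up a real point and contracts the two fibres through it. The paper phrases this as the conjugation $\tau=\psi\sigma\psi^{-1}$ with $\sigma\in\Aut_\r(\p^2)$ and $\psi$ a composition of a type $\I$ link and a standard type $\II$ link, which is exactly your four-link chain $g_4g_3g_2g_1$ with the twist $\sigma$ absorbed into the choice of blowing up $q_1$ rather than $q_2$ on the way back.
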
\begin{proof}
 Note  that a link of type $\mathrm{IV}$ is, up to automorphisms preserving the fibrations, equal to the following automorphism of $\p^1\times\p^1$
  $$\tau\colon ((x_1:x_2),(y_1:y_2))\mapsto ((y_1:y_2),(x_1:x_2)).$$
   We denote by $\psi\colon \p^2\dasharrow \p^1\times \p^1$ the birational map $(x:y:z)\dasharrow ((x:y),(x:z))$ and observe that $ \tau\psi=\psi\sigma$, where $\sigma\in \Aut_\r(\p^2)$. Hence, $\tau=\psi\tau\psi^{-1}$. Observing that $\psi$ decomposes into the blow-up of the point $(0:0:1)$, which is a link of type $\mathrm{III}$, followed by a standard link of type $\mathrm{II}$, we get the result.
\end{proof}
 
 \begin{lem}\label{Lem:Simple}
 Let $\pi\colon X\to \p^1$ and $\pi'\colon X'\to \p^1$ be two Mori fibrations, where $X,X'$ belong to the list $\F_0,\F_1,\d_6$. Let $\psi\colon X\dasharrow X'$ be a birational map, such that $\pi'\psi=\alpha\pi$ for some $\alpha\in \Aut_\r(\p^1)$. Then, $\psi$ is either an automorphism or $\psi=\varphi_n\cdots \varphi_1$, where each $\varphi_i$ is a sequence of \emph{standard} links of type $\mathrm{II}$. Moreover, if $\psi$ is an isomorphism on the real points $($i.e.\ is an isomorphism $X(\r)\to X'(\r))$, the standard links $\varphi_i$ can also be chosen to be isomorphisms on the real points.
 \end{lem}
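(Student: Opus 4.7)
The plan is to proceed by induction on the total number of base points of $\psi$ (proper or infinitely near, counted over $\c$ and grouped into Galois orbits under $G=\Gal(\c\vert\r)$). If $\psi$ has no base points at all, then it is a birational morphism between surfaces of Picard rank $2$ respecting fibrations with base $\p^1$; restricted to a general fibre it is an isomorphism $\p^1\to\p^1$, hence $\psi$ itself is an isomorphism of fibrations, an automorphism in the sense stated.

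For the inductive step, suppose $\psi$ has at least one base point. The relation $\pi'\psi=\alpha\pi$ confines all base points to finitely many fibres of $\pi$. I would pick a base point $p$ that is maximal in the natural order on the bubble space, i.e.\ $p\in X$ and no base point of $\psi$ is infinitely near to $p$. Set $S=\{p\}$ or $S=\{p,\bar p\}$ according as $p$ is real or imaginary, and let $\varphi\colon X\dashrightarrow Y$ be the map obtained by blowing up $S$ and contracting the strict transform(s) of the fibre(s) of $\pi$ through $S$. By Proposition~\ref{Prop:Sarkisov} and Example~\ref{Exa:Sarkisov}, $\varphi$ is a Sarkisov link of type~$\II$, the target $Y$ belongs to $\{\F_n\mid n\ge 0\}\cup\{\d_6\}$, and $\psi\varphi^{-1}\colon Y\dashrightarrow X'$ has strictly fewer base points than $\psi$.

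The hard part is to ensure that $\varphi$ is a \emph{standard} link and that $Y$ lies in $\{\F_0,\F_1,\d_6\}$, so that the induction hypothesis applies directly. Two pathological situations may arise: (a) $X=\F_1$ and the real point $p$ lies on the exceptional section, producing $Y=\F_2$; (b) $X=\d_6$ and the pair $\{p,\bar p\}$ meets the two conjugate exceptional $(-1)$-sections, producing a special link. In each case I would resolve the issue by an untwisting: insert an additional standard link before $\varphi$ (for instance a standard $\F_1\dashrightarrow\F_0$ using a sufficiently general auxiliary point in the same fibre, or a standard $\d_6\dashrightarrow\d_6$) that moves $p$ off the forbidden locus, and then apply the link. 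This introduces an extra intermediate surface still in $\{\F_0,\F_1,\d_6\}$, and every link in sight is standard.

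For the moreover assertion, suppose $\psi$ is an isomorphism $X(\r)\to X'(\r)$. Then $\psi$ has no real base points, for any resolution of such a point would introduce a real exceptional divisor that $\psi$ would have to collapse, contradicting bijectivity on the real loci. Consequently the point $p$ chosen at each inductive step is imaginary, $S=\{p,\bar p\}$ is a conjugate pair, and the standard link $\varphi$ blows up and contracts only imaginary curves; hence $\varphi$ restricts to an isomorphism $X(\r)\to Y(\r)$, and the resulting decomposition consists entirely of standard links which are isomorphisms on real loci.
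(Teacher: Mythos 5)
Your first stage --- peeling off links of type $\II$ by induction on the number of base-points, with the observation that regularity on the real loci forces every centre to be an imaginary pair --- is exactly the paper's. The problems are in what you yourself call the hard part. First, your enumeration of pathologies is incomplete: besides a real point on the exceptional section of $\F_1$, a conjugate imaginary pair lying on the exceptional section of $\F_1$ (resp.\ on a common section of self-intersection $0$ in $\F_0$) gives a link to $\F_3$ (resp.\ to $\F_2$), hence a non-standard link leaving the allowed list. These imaginary cases are precisely the ones that can occur in the ``moreover'' situation, where no real base-point exists; as written, the only Hirzebruch pathology you treat is the real one, so the case you actually need for the last assertion is the one you omit.

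Second, and more seriously, the untwisting breaks your induction. The auxiliary link $\theta$ is centred at a point that is \emph{not} a base-point of $\psi$, so $\psi\theta^{-1}$ acquires the base-points of $\theta^{-1}$; after also performing the link centred at $\theta(p)$, the remaining map has exactly as many base-points as $\psi$ did, and the induction on the number of base-points does not terminate as stated. (It could be repaired with a finer measure, since the newly created base-points are general and hence immediately removable by standard links, but you would have to set this up and check where the images of the other base-points of $\psi$ land.) The paper avoids both difficulties by decoupling the two steps: it first produces an arbitrary finite chain of type $\II$ links, allowed to be special and to pass through any $\F_N$, and only then post-processes that chain --- replacing each special $\d_6$-link $\varphi$ by $\varphi_2^{-1}(\varphi_2\varphi\varphi_1^{-1})\varphi_1$ with $\varphi_1,\varphi_2$ standard links at general imaginary points, and lowering the maximal Hirzebruch index $N$ occurring in the chain by a local replacement of two consecutive links around each occurrence of $\F_N$, with a separate descending induction on $N$. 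Finally, your parenthetical suggestion to take the auxiliary centre ``in the same fibre'' as $p$ is wrong: a link centred there contracts the fibre through $p$ and sends $p$ to the (highly non-general) contracted point; the auxiliary centre must be general, in particular on a different fibre.
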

 \begin{proof}
 We first show that $\psi=\varphi_n\cdots \varphi_1$, where each $\varphi_i$ is a sequence of links of type $\mathrm{II}$, not necessarily standard. This is done by induction on the number of base-points of $\psi$. If $\psi$ has no base-point, it is an isomorphism. If $q$ is a real proper base-point, or $q,\bar{q}$ are two proper imaginary base-points  (here proper means not infinitely near), we denote by $\varphi_1$ a Sarkisov link of type $\mathrm{II}$ centered at $q$ (or $q,\bar{q}$). Then, $(\varphi_1)^{-1}\psi$ has less base-points than $\psi$. The result follows then by induction. Moreover, if $\psi$ is an isomorphism on the real points, i.e.\ if $\psi$ and $\psi^{-1}$ have no real base-point, then so are all $\varphi_i$.
 
Let $\varphi\colon \d_6\dasharrow \d_6$ be a special link of type $\mathrm{II}$. Then, it is centered at two points $p_1,\bar{p_1}$ lying on the $(-1)$-curves $E_1,\bar{E_1}$. We choose then two general imaginary conjugate points $q_1,\bar{q_1}$, and let $q_2:=\varphi(q_1)$ and $\bar{q_2}:=\varphi(\bar{q_1})$. For $i=1,2$, we denote by $\varphi_i\colon \d_6\dasharrow \d_6$ a standard link centered at $q_i,\bar{q_i}$. The image by $\varphi_2$ of $E_1$ is a curve of self-intersection $1$. Hence, $\varphi_2 \varphi (\varphi_1)^{-1}$ is a standard link of type $\mathrm{II}$.

It remains to consider the case where each $\varphi_i$ is a link $\F_{n_i}\dasharrow \F_{n_{i+1}}$. We denote by $N$ the maximum of the integers $n_i$. If $N\le 1$, we are done because all links of type $\II$ between $\mathbb{F}_i$ and $\mathbb{F}_i'$ with $i,i'\le 1$ are standard. We can thus assume $N\ge 2$, which implies that there exists $i$ such that $n_i=N$, $n_{i-1}<N, n_{i+1}<N$. We choose two general imaginary points $q_1,\bar{q_1}\in \F_{n_{i-1}}$, and write $q_2=\varphi_{i-1}(q_1)$, $q_3=\varphi_{i}(q_2)$. For $i=1,2,3$, we denote by $\tau_i\colon \F_{n_{i}}\dasharrow \F_{n_{i}'}$ a Sarkisov link centered at $q_i,\bar{q_i}$. We obtain then the following commutative diagram
$$\xymatrix@C=1cm{
\F_{n_{i-1}}\ar@{-->}[r]^{\varphi_{i-1}}\ar@{-->}[d]^{\tau_1}& \F_{n_{i}}\ar@{-->}[r]^{\varphi_{i}}\ar@{-->}[d]^{\tau_2}& \F_{n_{i+1}}\ar@{-->}[d]^{\tau_3}\\
\F_{n_{i-1}'}\ar@{-->}[r]^{\varphi_{i-1}'}&\F_{n_{i}'}\ar@{-->}[r]^{\varphi_{i}'}& \F_{n_{i+1}'},
}$$
where $\varphi_{i-1}',\varphi_i'$ are Sarkisov links. By construction, $n_{i-1}',n_i',n_{i+1}'<N$, we can then replace $\varphi_{i}\varphi_{i-1}$ with $(\tau_3)^{-1}\varphi_{i}'\varphi_{i-1}'\tau_1$ and "avoid" $\F_N$. Repeating this process if needed, we end up with a sequence of Sarkisov links passing only through $\F_1$ and $\F_0$. Moreover, since this process does not add any real base-point, it preserves the regularity at real points.\end{proof}
\begin{cor}\label{Cor:DecSimple}
 Let $\pi\colon X\to W$ and $\pi'\colon X'\to W'$ be two Mori fibrations, where $X,X'$ are either $\F_0,\F_1,\d_6$ or $\p^2$.
 Any birational map $\psi\colon X\dasharrow X'$ is either an isomorphism preserving the fibrations or decomposes into links of type $\mathrm{I},\mathrm{III}$, and standard links of type $\mathrm{II}$.
\end{cor}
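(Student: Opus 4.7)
My plan is to combine Proposition~\ref{Prop:Sarkisov} with the reductions provided by Lemmas~\ref{Lem:4is123} and~\ref{Lem:Simple}. First, if $\psi$ is an isomorphism of fibrations then the statement holds trivially, so I assume otherwise. By Proposition~\ref{Prop:Sarkisov}, $\psi$ admits a Sarkisov factorisation $\psi=\sigma_n\circ\cdots\circ\sigma_1$ into links between Mori fibrations. By the classification preceding Lemma~\ref{Lem:4is123}, every intermediate surface lies in $\{\p^2,Q_{3,1},\F_m\ (m\ge 0),\d_6\}$, and every $\sigma_i$ is, up to isomorphism of fibrations, one of the links in Example~\ref{Exa:Sarkisov}. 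Applying Lemma~\ref{Lem:4is123}, I replace every $\sigma_i$ of type~$\IV$ by a product of links of type~$\I$, $\III$, and standard type~$\II$, so I may assume that no $\sigma_i$ is of type~$\IV$.

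The remaining task is to eliminate the special type~$\II$ links. I group the $\sigma_i$ into maximal consecutive sub-chains consisting entirely of type~$\II$ links. Because a type~$\II$ link preserves the base $W$ of the Mori fibration up to isomorphism, all intermediate surfaces of such a sub-chain share the same $W$. Consider first a sub-chain with $W=\p^1$: its surfaces are of Picard rank~$2$ and lie in $\{\F_m\ (m\ge 0),\d_6\}$, and the endpoints $X_i,X_j$ are either boundary terms (equal to $X$ or $X'$, hence in $\{\F_0,\F_1,\d_6\}$ by hypothesis) or joined by an adjacent link of type~$\I$ or~$\III$ to a rank-one surface $\p^2$ or $Q_{3,1}$, which forces them to lie in $\{\F_1,\d_6\}$ (since the only type~$\I$ link from $\p^2$ goes to $\F_1$, and the only type~$\I$ link from $Q_{3,1}$ goes to $\d_6$). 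In every case the endpoints belong to $\{\F_0,\F_1,\d_6\}$, and the composition $\sigma_j\circ\cdots\circ\sigma_{i+1}$ satisfies $\pi_{X_j}\circ(\sigma_j\circ\cdots\circ\sigma_{i+1})=\alpha\circ\pi_{X_i}$ for some $\alpha\in\Aut_\r(\p^1)$, since each $\sigma_\ell$ does. Lemma~\ref{Lem:Simple} then rewrites this composition either as an isomorphism of fibrations, or as a product of standard type~$\II$ links.

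The main obstacle is the remaining possibility of sub-chains of type~$\II$ links with $W=\{*\}$, namely sequences of stereographic projections between $\p^2$ and $Q_{3,1}$: these are special and the two towers $\{\p^2,\F_m\}$ and $\{Q_{3,1},\d_6\}$ of Mori fibrations are connected only through them. Exploiting the hypothesis $X,X'\in\{\F_0,\F_1,\d_6,\p^2\}$, one combines any two consecutive stereographic projections into a birational self-map of $\p^2$ (respectively of $Q_{3,1}$) and re-applies the procedure to it, so that an induction on the number of stereographic links in the factorisation disposes of all such sub-chains. After all these substitutions, the resulting decomposition contains only links of type~$\I$, type~$\III$, and standard type~$\II$, which is the conclusion.
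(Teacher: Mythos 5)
Your first two paragraphs are correct and follow exactly the route the paper intends (its own proof is a one-line citation of Proposition~\ref{Prop:Sarkisov}, Lemmas~\ref{Lem:4is123} and~\ref{Lem:Simple}, and Example~\ref{Exa:Sarkisov}): factor via the Sarkisov program, remove the type~$\mathrm{IV}$ links by Lemma~\ref{Lem:4is123}, and normalise each maximal sub-chain of type~$\mathrm{II}$ links over $\p^1$ by Lemma~\ref{Lem:Simple}, after checking that its endpoints land in $\{\F_0,\F_1,\d_6\}$. That part of your write-up is fine and in fact more detailed than the paper.

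The last paragraph, however, misreads the statement and then gives an argument that does not work. The type~$\mathrm{II}$ links between $\p^2$ and $Q_{3,1}$ (the stereographic projections) are \emph{not} meant to be eliminated: the standard/special dichotomy is a normalisation of the type~$\mathrm{II}$ links between conic bundles only, and the paper keeps the links $\p^2\dashrightarrow Q_{3,1}$ in the decomposition --- see the proof of Theorem~\ref{thm:BirP2}, cases $(3)$ and $(4)$, which explicitly treat a first link $\varphi_1\colon\p^2\dashrightarrow Q_{3,1}$ of type~$\mathrm{II}$ after invoking this corollary. Trying to remove them is a much stronger claim, essentially asserting that maps such as the standard quintic transformations are generated by the remaining links, which is contrary to the whole point of Theorems~\ref{thm:BirP2} and~\ref{ThmRongaVust}. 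Moreover your proposed mechanism fails on its own terms: (i) pairing "two consecutive stereographic projections" does not cover the typical situation, e.g.\ in the five-link factorisation of a standard quintic the two links $\p^2\dashrightarrow Q_{3,1}$ and $Q_{3,1}\dashrightarrow\p^2$ are separated by a chain through $\d_6$, so each is a maximal type~$\mathrm{II}$ sub-chain of length one and cannot be paired; and (ii) even when two such links are adjacent, their composite is (up to automorphisms) the quadratic map $\sigma_1$, and "re-applying the procedure" to it just returns the same two links, so the claimed induction on the number of stereographic links has no reason to terminate. The correct fix is simply to leave these links in place, as the statement (read as the paper uses it) allows.
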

\begin{proof}
Follows from Proposition~\ref{Prop:Sarkisov}, Lemmas~\ref{Lem:4is123} and~\ref{Lem:Simple}, and the description of Example~\ref{Exa:Sarkisov}.
\end{proof}
 
\section{Generators of the group $\Aut(\p^2(\r))$}\label{AutP2R}
We start this section by describing three kinds of elements of $\Aut(\p^2(\r))$, which are birational maps of $\p^2$ of degree $5$. These maps are associated to three pairs of conjugate imaginary points; the description is then analogue to the description of quadratic maps, which are associated to three points.
\begin{example}\label{Exa:QuinticStd}
Let $p_1,\bar{p_1},p_2,\bar{p_2},p_3,\bar{p_3}\in \p^2$ be  three pairs of imaginary points of $\p^2$, not lying on the same conic. Denote by $\pi\colon X\to \p^2$ the blow-up of the six points, which is an isomorphism $X(\r)\to \p^2(\r)$. Note that $X$ is isomorphic to a smooth cubic of $\p^3$. The set of strict transforms of the conics passing through five of the six points corresponds to  three pairs of imaginary  $(-1)$-curves (or lines on the cubic), and the six curves are disjoint. The contraction of the six curves gives a birational morphism $\eta\colon X\to \p^2$, inducing an isomorphism $X(\r)\to \p^2(\r)$, which contracts the curves onto three pairs of imaginary points $q_1,\bar{q_1},q_2,\bar{q_2},q_3,\bar{q_3}\in \p^2$; we choose the order so that $q_i$ is the image of the conic not passing through $p_i$.
The  map $\psi=\eta\pi^{-1}$ is a birational map $\p^2\dasharrow \p^2$ inducing an isomorphism $\p^2(\r)\to \p^2(\r)$. 

 Let $L\subset \p^2$ be a general line of $\p^2$. The strict transform of $L$ on $X$ by $\pi^{-1}$ has self-intersection $1$ and intersects the six curves contracted by $\eta$ into $2$ points (because these are conics). The image $\psi(L)$ has then six singular points of multiplicity $2$ and self-intersection $25$; it is thus a quintic passing through the $q_i$ with multiplicitiy $2$. The construction of $\psi^{-1}$ being symmetric as the one of $\psi$, the linear system of $\psi$ consists of quintics of $\p^2$ having multiplicity $2$ at  $p_1,\bar{p_1},p_2,\bar{p_2},p_3,\bar{p_3}$. 

One can moreover check that $\psi$ sends the pencil of conics through $p_1,\bar{p_1},p_2,\bar{p_2}$ onto the pencil of conics through $q_1,\bar{q_1},q_2,\bar{q_2}$ (and the same holds for the two other real pencil of conics, through $p_1,\bar{p_1},p_3,\bar{p_3}$ and through $p_2,\bar{p_2},p_3,\bar{p_3}$).
\end{example}

\begin{example}\label{Exa:QuinticSpe}
Let $p_1,\bar{p_1},p_2,\bar{p_2}\in \p^2$ be two pairs of imaginary points of $\p^2$, not on the same line. Denote by $\pi_1\colon X_1\to \p^2$ the blow-up of the four points, and by $E_2,\bar{E_2}\subset X_1$ the curves contracted onto $p_2,\bar{p_2}$ respectively.  Let $p_3\in E_2$ be a point, and $\bar{p_3}\in \bar{E_2}$ its conjugate.  We assume that there is no conic of $\p^2$ passing through $p_1,\bar{p_1},p_2,\bar{p_2},p_3,\bar{p_3}$ and let $\pi_2\colon X_2\to X_1$ be the blow-up of $p_3,\bar{p_3}$.

On $X$, the strict transforms of the two conics $C,\bar{C}$ of $\p^2$, passing through $p_1,\bar{p_1},p_2,\bar{p_2},p_3$ and $p_1,\bar{p_1},p_2,\bar{p_2},\bar{p_3}$ respectively, are imaginary conjugate disjoint $(-1)$ curves. The contraction of these two curves gives a birational morphism $\eta_2\colon X_2\to Y_1$, contracting $C$, $\bar{C}$ onto two points $q_3,\bar{q_3}$. On $Y_1$, we find two pairs of imaginary $(-1)$-curves, all four curves being disjoint. These are the strict transforms of the exceptional curves associated to $p_2,\bar{p_2}$, and of the conics passing through $p_1,p_2,\bar{p_2},p_3,\bar{p_3}$ and $\bar{p_1},p_2,\bar{p_2},p_3,\bar{p_3}$ respectively. The contraction of these curves gives a birational morphism $\eta_1\colon Y_1\to \p^2$, and the images of the four curves are points $q_2,\bar{q_2},q_1,\bar{q_1}$ respectively. Note that the four maps $\pi_1,\pi_2,\eta_1,\eta_2$ are blow-ups of imaginary points, so the birational map $\psi=\eta_1\eta_2(\pi_1\pi_2)^{-1}\colon \p^2\dasharrow \p^2$ induces an isomorphism $\p^2(\r)\to \p^2(\r)$.

In the same way as in Example~\ref{Exa:QuinticStd}, we find that the linear system of $\psi$ is of degree $5$, with multiplicity $2$ at the points $p_i,\bar{p_i}$. The situation is similar for $\psi^{-1}$, with the six points $q_i,\bar{q_i}$ in the same configuration: $q_1,\bar{q_1},q_2,\bar{q_2}$ lie on the plane and $q_3,\bar{q_3}$ are infinitely near to $q_2,\bar{q_2}$ respectively.

One can moreover check that $\psi$ sends the pencil of conics through $p_1,\bar{p_1},p_2,\bar{p_2}$ onto the pencil of conics through $q_1,\bar{q_1},q_2,\bar{q_2}$  and the pencil of conics through  $p_2,\bar{p_2},p_3,\bar{p_3}$ onto the pencil of conics through $q_2,\bar{q_2},q_3,\bar{q_3}$. But, contrary to Example~\ref{Exa:QuinticStd}, there is no pencil of conics through  $q_1,\bar{q_1},q_3,\bar{q_3}$.
\end{example}

\begin{example}\label{Exa:QuinticSpe2}
Let $p_1,\bar{p_1}$ be a pair of two conjugate imaginary points of $\p^2$. We choose a point $p_2$ in the first neighbourhood of $p_1$, and a point $p_3$ in the first neighbourhood of $p_2$, not lying on the exceptional divisor of $p_1$. We denote by $\pi\colon X\to \p^2$ the blow-up of $p_1,\bar{p_1},p_2,\bar{p_2},p_3\bar{p_3}$. We denote by $E_i,\bar{E_i}\subset X$  the irreducible exceptional curves corresponding to the points $p_i,\bar{p_i}$, for $i=1,2,3$. The strict transforms of the two conics through $p_1,\bar{p_1},p_2,\bar{p_2},p_3$ and $p_1,\bar{p_1},p_2,\bar{p_2},\bar{p_3}$ respectively are disjoint $(-1)$-curves on $X$, intersecting the exceptional curves $E_1,\bar{E_1},E_2,\bar{E_2}$ similarly as $E_3,\bar{E_3}$. Hence, there exists a birational morphism $\eta\colon X\to \p^2$ contracting the strict transforms of the two conics and the curves $E_1,\bar{E_1},E_2,\bar{E_2}$.

As in Examples~\ref{Exa:QuinticStd} and \ref{Exa:QuinticSpe}, the linear system of $\psi=\eta\pi^{-1}$ consists of quintics with multiplicity two at the six points $p_1,\bar{p_1},p_2,\bar{p_2},p_3,\bar{p_3}$.
\end{example}

\begin{dfn}
The birational maps of $\p^2$ of degree $5$ obtained in Example~\ref{Exa:QuinticStd} will be called \emph{standard quintic transformations} and those of Example~\ref{Exa:QuinticSpe} and Example~\ref{Exa:QuinticSpe2} will be called \emph{special quintic transformations} respectively.
\end{dfn}

\begin{lem}\label{Lem:AutP2des}
Let $\psi\colon \p^2\dasharrow \p^2$ be a birational map inducing an isomorphism $\p^2(\r)\to \p^2(\r)$. The following hold:
\begin{enumerate}
\item
The degree of $\psi$ is $4k+1$ for some integer $k\ge 0$.
\item
Every multiplicity of the linear system of $\psi$ is even. 
\item
Every curve contracted by $\psi$ is of even degree.
\item
If $\psi$ has degree $1$, it belongs to $\Aut_\r(\p^2)=\PGL(3,\r)$.
\item
If $\psi$ has degree $5$, then it is a standard or special quintic transformation, described in Examples~$\ref{Exa:QuinticStd}$, $\ref{Exa:QuinticSpe}$ or  $\ref{Exa:QuinticSpe2}$, and has thus exactly $6$ base-points.
\item If $\psi$ has at most $6$ base-points, then $\psi$ has degree $1$ or $5$.
\end{enumerate}
\begin{rem}
Part (1) is \cite[Teorema 1]{rv}.
\end{rem}
\end{lem}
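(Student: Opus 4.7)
The basic tools are the Noether equalities
$$\sum_{i} m_i = 3(d-1), \qquad \sum_{i} m_i^2 = d^2-1,$$
for the base-points $p_i$ of $\psi$ with multiplicities $m_i$, combined with the observation that since $\psi^{\pm 1}$ are defined at every real point, each $p_i$ is imaginary and the base-points occur in complex conjugate pairs $\{p_j,\bar{p}_j\}$ with common multiplicity $m_j$. I would treat part (1) as a consequence of part (2), rather than citing Ronga-Vust directly; the substantive work lies in (2), (5), and (6).

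The plan is to prove (2) first. Given a conjugate pair $(p,\bar{p})$ of base-points of $\psi$ with multiplicity $m$, I would study the induced birational map on the blow-up $X\to\p^2$ of this pair. The resolution of $\psi$ factors through $X$, and the induced $\widetilde{\psi}\colon X\dashrightarrow \p^2$ remains an isomorphism on real loci since the exceptional divisors are non-real. By Corollary~\ref{Cor:DecSimple}, $\widetilde{\psi}$ decomposes into standard Sarkisov links, each blowing up and contracting a conjugate pair. Tracking the class of a hyperplane section through this decomposition, one sees that the transformation sends $dh$ to $dh$ modulo the sublattice generated by the real classes $E_j+\bar{E}_j$, each entering with an even coefficient; this forces $m$ to be even. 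This parity-tracking step is the main obstacle, since the topological hypothesis of real regularity must be converted into an algebraic $2$-divisibility that the Sarkisov decomposition only records implicitly.

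With (2) in hand the remaining parts follow easily. For (1): in $\sum_i m_i=3(d-1)$ each conjugate pair contributes $2m_j$ with $m_j$ even, so $3(d-1)\equiv 0\pmod{4}$, hence $d\equiv 1\pmod{4}$. For (3): a curve contracted by $\psi$ is the image under $\pi$ of a $(-1)$-curve of class $\alpha h-\sum \beta_i E_i$ on the resolution, satisfying $\alpha d=\sum\beta_i m_i$; since every $m_i$ is even and $d$ is odd, $\alpha$ must be even. For (4): $d=1$ forces $\sum_i m_i = 0$, so $\psi\in\PGL(3,\r)$. For (6), group the base-points into $r\le 3$ conjugate pairs; Cauchy-Schwarz yields $(\sum_i m_i)^2\le 2r\sum_i m_i^2$, i.e.\ $9(d-1)\le 2r(d+1)$, and hence $d\le 5$ when $r\le 3$, giving $d\in\{1,5\}$ by (1).

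Finally, for (5), assuming $d=5$ the Noether equations combined with (2) force exactly six base-points, all of multiplicity $2$, forming three conjugate pairs. A case analysis on the number of proper (as opposed to infinitely near) pairs among the three recovers the configurations of Examples~\ref{Exa:QuinticStd}, \ref{Exa:QuinticSpe}, \ref{Exa:QuinticSpe2}: in the all-proper case one checks that no conic passes through all six points, since otherwise it would be a fixed component of the linear system (impossible for a birational map); in the two-proper, one-infinitely-near case the infinitely near pair must be supported on one of the two proper pairs, yielding Example~\ref{Exa:QuinticSpe}; in the one-proper, two-infinitely-near case the chain structure of Example~\ref{Exa:QuinticSpe2} is forced by the successive intersection numbers at the blow-up stages; a configuration with no proper pair is combinatorially excluded, since any cluster of imaginary base-points must be anchored over at least one proper base-point.
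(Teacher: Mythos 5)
Your deductions of (1), (3), (4) and (6) from part (2), and your case analysis for (5), all match the paper's in substance (the paper runs the dependency (3)$\Rightarrow$(2)$\Rightarrow$(1) rather than (2)$\Rightarrow$(3), but either order is fine once one of the two is established independently). The problem is that your proof of (2), which you correctly identify as the linchpin, has a genuine gap. You propose to blow up one conjugate pair of base-points and decompose the induced map via Corollary~\ref{Cor:DecSimple}, then ``track the class of a hyperplane section'' and conclude that each class $E_j+\bar{E}_j$ enters ``with an even coefficient.'' That evenness is exactly the statement to be proved, and nothing in the Sarkisov decomposition hands it to you: each individual standard link blows up its base pair with multiplicity $1$ (odd), and the multiplicities of a composite do not accumulate additively link by link, so there is no visible parity invariant being propagated. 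Moreover Corollary~\ref{Cor:DecSimple} does not even apply to the surface you introduce: the blow-up of $\p^2$ at an imaginary pair has Picard rank $3$ and is not one of the Mori fibrations $\p^2$, $Q_{3,1}$, $\F_0$, $\F_1$, $\d_6$ to which that corollary refers. You flag this step yourself as ``the main obstacle''; as written it is not a proof.

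The paper's argument is much shorter and avoids the decomposition entirely. A curve $C$ contracted by $\psi$ comes with its conjugate $\bar{C}$, also contracted, and $C\cap\bar{C}$ contains no real point (a real point there would be a real base-point or a real point at which $\psi$ fails to be a local isomorphism). Since the non-real points of $C\cap\bar{C}$ come in conjugate pairs, $C\cdot\bar{C}=(\deg C)^2$ is even, so $\deg C$ is even; this is (3). The multiplicities of the linear system of $\psi^{-1}$ are precisely the degrees of the curves contracted by $\psi$, so they are even, and applying the same reasoning to $\psi^{-1}$ gives (2). I would recommend replacing your parity-tracking step by this intersection-theoretic argument; everything else in your write-up then goes through.
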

\begin{proof}
Denote by $d$ the degree of $\psi$ and by $m_1,\dots,m_k$ the multiplicities of the base-points of $\psi$. The Noether equalities yield $\sum_{i=1}^k m_i=3(d-1)$ and $\sum_{i=1}^k (m_i)^2=d^2-1$.

Let $C,\bar{C}$ be a pair of two curves contracted by $\psi$. Since $C\cap \bar{C}$ does not contain any real point, the degree of $C$ and $\bar{C}$ is even. This yields $(2)$, and implies that all multiplicities of the linear system of $\psi^{-1}$ are even, giving $(2)$.

In particular, $3(d-1)$ is a multiple of $4$ (all multiplicities come by pairs of even integers), which implies that $d=4k+1$ for some integer $k$. Hence $(1)$ is proved.

If the number of base-points is at most $k=6$, then by Cauchy-Schwartz we get 
$$9(d-1)^2=\left(\sum_{i=1}^k m_i\right)^2\le k\sum_{i=1}^k (m_i)^2=k (d^2-1)=6(d^2-1)$$
This yields $9(d-1)\le 6(d+1)$, hence $d\le 5$.

If $d=1$, all $m_i$ are zero, and $\psi\in \Aut_\r(\p^2)$, so we get $(4)$.

If $d=5$, the Noether equalities yield $k=6$ and $m_1=m_2=\dots=m_6=2$. Hence, the base-points of $\psi$ consist of three pairs of conjugate imaginary points $p_1,\bar{p_1},p_2,\bar{p_2},p_3,\bar{p_3}$. Moreover, if a conic passes through $5$ of the six points, its free intersection with the linear system is zero, so it is contracted by $\psi$, and there is no conic through the six points.

$(a)$ If the six points belong to $\p^2$, the map is a standard quintic transformation, described in Example~$\ref{Exa:QuinticStd}$.

$(b)$ If two points are infinitely near, the map is a special quintic transformation, described in Example~$\ref{Exa:QuinticSpe}$.

$(c)$ If four points are infinitely near, the map is a special quintic transformation, described in Example~$\ref{Exa:QuinticSpe2}$.
\end{proof}

Before proving Theorem~\ref{ThmRongaVust}, we will show that all quintic transformations are generated by linear automorphisms and standard quintic transformations:
\begin{lem}\label{Lem:Quinticgenstd}
Every quintic transformation  $\psi\in \Aut(\p^2(\r))$ belongs to the group generated by $\Aut_\r(\p^2)$ and standard quintic transformations.
\end{lem}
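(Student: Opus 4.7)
The plan is to exploit Lemma~\ref{Lem:AutP2des}: by parts $(5)$ and $(6)$, every quintic $\psi\in\Aut(\p^2(\r))$ is either a standard quintic (Example~\ref{Exa:QuinticStd}) or a special quintic of type $(b)$ (Example~\ref{Exa:QuinticSpe}) or type $(c)$ (Example~\ref{Exa:QuinticSpe2}). The standard case is immediate, so it suffices to handle the two special configurations.

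The core strategy is, for each special $\psi$, to produce a standard quintic $\psi_0$ such that $\eta:=\psi\circ\psi_0^{-1}$ (which lies in $\Aut(\p^2(\r))$ since both factors do) has at most six base-points. Lemma~\ref{Lem:AutP2des}$(6)$ then forces $\eta$ to be of degree $1$ or $5$: in the former case $\eta\in\Aut_\r(\p^2)$ and we conclude $\psi=\eta\circ\psi_0$; in the latter case $\eta$ is itself a quintic, and one argues that either $\eta$ is standard (and we are done), or $\eta$ has a strictly ``simpler'' base configuration than $\psi$ (fewer infinitely near base-points, measured by the height of the tower), allowing an induction on this depth.

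For case $(b)$, with base-points $p_1,\bar p_1,p_2,\bar p_2,p_3,\bar p_3$ where $p_3\succ p_2$, I would take $\psi_0$ to be a standard quintic having base-points $p_1,\bar p_1,p_2,\bar p_2,r,\bar r$, for a generic conjugate imaginary pair $r,\bar r\in\p^2$ chosen to avoid the exceptional conic condition in Example~\ref{Exa:QuinticStd}. The four shared base-points $p_1,\bar p_1,p_2,\bar p_2$ cancel on a minimal common resolution of $\psi$ and $\psi_0$, and a direct check on this model using the conic-pencil description from Examples~\ref{Exa:QuinticStd}--\ref{Exa:QuinticSpe} (recall that both maps send the pencil of conics through $p_1,\bar p_1,p_2,\bar p_2$ to the analogous pencil on the target) shows that the base locus of $\eta$ is concentrated at six points coming from $r,\bar r$ and the images of $p_3,\bar p_3$. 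For case $(c)$, where all six base-points sit in a tower above a single conjugate pair on $\p^2$, I proceed analogously with a standard $\psi_0$ adapted to the points actually lying on $\p^2$, which peels off the deepest layer of the tower and reduces either to case $(b)$ or to a standard quintic; an induction on the depth then finishes the argument.

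The main obstacle I anticipate is the careful base-point bookkeeping for $\eta$, in particular verifying that the cancellation at the shared base-points really drops the base-point count to at most six, with the correct multiplicity pattern demanded by Lemma~\ref{Lem:AutP2des}. This is most transparent on the minimal resolution, where the exceptional configurations of Examples~\ref{Exa:QuinticStd}--\ref{Exa:QuinticSpe2} can be superimposed and the Noether equalities applied to the residual linear system.
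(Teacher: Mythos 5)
Your reduction to the special quintics of Examples~\ref{Exa:QuinticSpe} and~\ref{Exa:QuinticSpe2} is the right starting point, but the key step fails: the one-sided composition $\eta=\psi\circ\psi_0^{-1}$ with a standard quintic $\psi_0$ sharing only the four base-points $p_1,\bar p_1,p_2,\bar p_2$ does \emph{not} have at most six base-points. Its degree is $5\cdot 5-\sum m\,m'=25-4\cdot(2\cdot 2)=9$, and the Noether equalities (with all multiplicities even and in conjugate pairs) then force \emph{eight} base-points, four of multiplicity $4$ (at the points onto which $\psi_0$ contracts the conics through five of $p_1,\bar p_1,p_2,\bar p_2,r,\bar r$ containing the shared quadruple) and four of multiplicity $2$ (at $r',\bar r'$ and at the images of $p_3,\bar p_3$). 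So $\eta$ is strictly more complicated than $\psi$, Lemma~\ref{Lem:AutP2des}(6) does not apply, and the proposed induction on the depth of the tower never gets started. In fact no single standard quintic composed on one side can work: to reach $\deg\eta\le 5$ one would need the two maps to share at least five base-points counted in conjugate pairs, hence all six, which would force $\psi$ to be standard.

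The paper repairs exactly this by composing on \emph{both} sides, forming $\varphi_2\psi(\varphi_1)^{-1}$, where $\varphi_1$ is a standard quintic with base-points $p_1,\bar p_1,p_2,\bar p_2,p_4,\bar p_4$ for a general imaginary pair $p_4,\bar p_4$, and $\varphi_2$ is a standard quintic with base-points $q_1,\bar q_1,q_2,\bar q_2,q_4,\bar q_4$, where the $q_i$ are the base-points of $\psi^{-1}$ and, crucially, $q_4=\psi(p_4)$. This matching condition makes the contribution of $p_4,\bar p_4$ cancel against that of $q_4,\bar q_4$. The verification is carried out not by multiplicity bookkeeping but by lifting to the conic bundles obtained by blowing up the four proper base-points on each copy of $\p^2$ (the fibres being the conics through them, which $\psi$ and the $\varphi_i$ all preserve, as you observed): each lifted map is an elementary transformation at an imaginary pair, and the composition collapses to a single elementary transformation centred at a pair of proper points of $\p^2$, i.e.\ a standard quintic. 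The case of Example~\ref{Exa:QuinticSpe2} is then handled identically with $\varphi_1,\varphi_2$ special quintics of the type of Example~\ref{Exa:QuinticSpe}, reducing to the previous case. If you insist on a one-sided normalization you would have to control degree-$9$ maps with eight base-points, which the lemmas of the paper do not cover.
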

\begin{proof}By Lemma~\ref{Lem:AutP2des}, we only need to show the result when $\psi$ is a special quintic transformation as in Example~\ref{Exa:QuinticSpe} or Example~\ref{Exa:QuinticSpe2}.

We first  assume that $\psi$ is a special quintic transformation as in Example~\ref{Exa:QuinticSpe}, with base-points $p_1,\bar{p_1},p_2,\bar{p_2},p_3,\bar{p_3}$, where $p_3,\bar{p_3}$ are infinitely near to $p_2,\bar{p_2}$. For $i=1,2$, we denote by $q_i\in \p^2$ the point which is the image by $\psi$ of the conic passing the five points of  $\{p_1,\bar{p_1},p_2,\bar{p_2},p_3,\bar{p_3}\}\setminus\{p_i\}$.  Then, the base-points of $\psi^{-1}$ are $q_1,\bar{q_1},q_2,\bar{q_2},q_3,\bar{q_3}$, where $q_3$, $\bar{q_3}$ are points infinitely near to $q_2$, $\bar{q_2}$ respectively (see Example~\ref{Exa:QuinticSpe}). 
 We choose a general pair of conjugate imaginary points $p_4,\bar{p_4}\in \p^2$, and write $q_4=\psi(p_4)$, $\bar{q_4}=\psi(\bar{p_4})$. We denote by $\varphi_1$ a standard quintic transformation having base-points at $p_1,\bar{p_1},p_2,\bar{p_2},p_4,\bar{p_4}$, and by $\varphi_2$ a standard quintic transformation having base-points at $q_1,\bar{q_1},q_2,\bar{q_2},q_4,\bar{q_4}$. We now prove that $\varphi_2\psi(\varphi_1)^{-1}$ is a standard quintic transformation; this will yield the result.
 Denote by $p_i',\bar{p_i}'$ the base-points of $(\varphi_1)^{-1}$, with the order associated to the $p_i$, which means that $p_i'$ is the image by $\varphi_i$ of a conic not passing through $p_i$ (see Example~\ref{Exa:QuinticStd}). Similary, we denote by $q_i',\bar{q_i}'$ the base-points of $(\varphi_2)^{-1}$. We obtain the following commutative of birational maps, where the arrows indexed by points are blow-ups of these points:
$$\xymatrix@C=1cm{
& Y_1\ar[ld]_{p_4',\bar{p_4}'}\ar[rd]^{p_4,\bar{p_4}} && Y_2\ar[ld]_{p_3,\bar{p_3}}\ar[rd]^{q_3,\bar{q_3}} && Y_3\ar[ld]_{q_4,\bar{q_4}}\ar[rd]^{q_4',\bar{q_4}'}\\
X_1\ar[d]_(.38){p_1',\bar{p_1}'}_(.62){p_2',\bar{p_2}'}& &X_2\ar@{-->}[ll]_{\hat{\varphi_1}}\ar@{-->}[rr]^{\hat{\psi}}\ar[d]_(.38){p_1,\bar{p_1}}_(.62){p_2,\bar{p_2}}& &X_3\ar@{-->}[rr]^{\hat{\varphi_2}} \ar[d]_(.38){q_1,\bar{q_1}}_(.62){q_2,\bar{q_2}}& &X_4\ar[d]_(.38){q_1',\bar{q_1}'}_(.62){q_2,\bar{q_2}'}\\
\p^2&& \ar@{-->}[ll]_{\varphi_1}\p^2\ar@{-->}[rr]^{\psi} && \p^2\ar@{-->}[rr]^{\varphi_2} && \p^2.
}$$
Each of the surfaces $X_1,X_2,X_3,X_4$ admits a conic bundle structure $\pi_i\colon X_i\to \p^1$, which fibres corresponds to the conics passing through the four points blown-up on $\p^2$ to obtain $X_i$. Moreover, $\hat{\varphi}_1$, $\hat{\psi}$, $\hat{\varphi}_2$ preserve these conic bundle structures. The map $(\hat{\varphi}_1)^{-1}$ blows-up $p_4,\bar{p_4}'$ and contract the fibres associated to them, then $\hat{\psi}$ blows-up $p_3,\bar{p_3}$ and contract the fibres associated to them. The map $\hat{\varphi}_2$ blow-ups the points $q_4,\bar{q_4}$, which correspond to the image of the curves contracted by $(\hat{\varphi}_1)^{-1}$, and contracts their fibres, corresponding to the exceptional divisors of the points $p_4,\hat{p_4}'$. Hence, $\hat{\varphi}_2\hat{\psi}\hat{\varphi}_1$ is the blow-up of two imaginary points $p_3',\hat{p_3}'\in X_1$, followed by the contraction of their fibres. We obtain the following commutative diagram
$$\xymatrix@C=1cm{
& Z\ar[ld]_{p_3',\bar{p_3}'}\ar[rd]^{q_3',\bar{q_3}'}\\
X_1\ar@{-->}[rr]^{\hat{\varphi_2}\hat{\psi}(\hat{\varphi_1})^{-1}}\ar[d]_(.38){p_1',\bar{p_1}'}_(.62){p_2',\bar{p_2}'}& &X_4\ar[d]_(.38){q_1',\bar{q_1}'}_(.62){q_2,\bar{q_2}'}\\
\p^2\ar@{-->}[rr]^{{\varphi_2}{\psi}({\varphi_1})^{-1}}&& \p^2,
}$$
and the points $p_3',\bar{p_3}'$ correspond to point of $\p^2$, hence $\varphi_2\psi(\varphi_1)^{-1}$ is a standard quintic transformation.

The remaining case is when $\psi$ is a special quintic transformation as in Example~\ref{Exa:QuinticSpe2}, with base-points with base-points $p_1,\bar{p_1},p_2,\bar{p_2},p_3,\bar{p_3}$, where $p_3,\bar{p_3}$ are infinitely near to $p_2,\bar{p_2}$ and these latter are infinitely near to $p_1,\bar{p_1}$. The map $\psi^{-1}$ has base-points $q_1,\bar{q_1},q_2,\bar{q_2},q_3,\bar{q_3}$, having the same configuration (see Example~\ref{Exa:QuinticSpe2}).
 We choose a general pair of conjugate imaginary points $p_4,\bar{p_4}\in \p^2$, and write $q_4=\psi(p_4)$, $\bar{q_4}=\psi(\bar{p_4})$. We denote by $\varphi_1$ a special quintic transformation having base-points at $p_1,\bar{p_1},p_2,\bar{p_2},p_4,\bar{p_4}$, and by $\varphi_2$ a special quintic transformation having base-points at $q_1,\bar{q_1},q_2,\bar{q_2},q_4,\bar{q_4}$. The maps $\varphi_1,\varphi_2$ have four proper base-points, and are thus given in Example~\ref{Exa:QuinticSpe}. The same proof as before implies that $\varphi_2\psi(\varphi_1)^{-1}$ is a special quintic transformation with four base-points. This gives the result.\end{proof}

\begin{lem}\label{Lem:FiveLinksDegree5}
Let $\varphi\colon \p^2\dasharrow \p^2$ be a birational map, that decomposes as $\varphi=\varphi_5\cdots \varphi_1$, where $\varphi_i\colon X_{i-1}\dasharrow X_i$ is a Sarkisov link for each $i$, where $X_0=\p^2$, $X_1=Q_{3,1}$, $X_2=X_3=\d_6$, $X_4=\s_2$, $X_5=\p^2$. If $\varphi_2$ is an automorphism of $\d_6(\r)$ and $\varphi_4\varphi_3\varphi_2$ sends the base-point of $(\varphi_1)^{-1}$ onto the base-point of $\varphi_5$, then $\varphi$ is an automorphism of $\p^2(\r)$ of degree $5$.
\end{lem}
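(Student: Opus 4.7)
The plan is to establish that $\varphi\in \Aut(\p^2(\r))$ by analyzing real base-points along the chain, then to bound the total number of base-points of $\varphi$ on $\p^2$ by $6$, and finally to apply Lemma~\ref{Lem:AutP2des}(6) to pin down the degree.

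First, I would enumerate the base-points of each link using Example~\ref{Exa:Sarkisov}. Up to left and right composition with automorphisms of the fibrations, the link $\varphi_1\colon \p^2\dashrightarrow Q_{3,1}$ is the inverse stereographic projection: it has two imaginary conjugate base-points in $\p^2$ and contracts the real line joining them onto a single real base-point $q_0$ of $(\varphi_1)^{-1}$. The middle links $\varphi_2,\varphi_3$ have only imaginary base-points: this is the content of the hypothesis that $\varphi_2$ is an automorphism of $\d_6(\r)$, and the analogous fact for $\varphi_3$ follows because it is a link between $\d_6$ and $\d_6$ whose real behaviour is pinned down by Example~\ref{Exa:Sarkisov}(9). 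The link $\varphi_4\colon \d_6\to \s_2$ is of type~$\III$, hence a morphism contracting two imaginary conjugate $(-1)$-curves, with no base-point on $\d_6$. Finally $\varphi_5\colon \s_2\dashrightarrow \p^2$ is (again up to automorphisms of fibrations) an inverse stereographic-type link with a single real base-point $q_5$, and it contracts a real curve onto $q_5$ under $(\varphi_5)^{-1}$.

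Second, I would verify regularity on real points. On $\r$-points, $\varphi_2,\varphi_3,\varphi_4$ induce homeomorphisms of the corresponding real loci (no real base-points); and $\varphi_1$ is defined at every point of $\p^2(\r)$, collapsing the real line $L$ to $q_0\in Q_{3,1}(\r)$. The hypothesis $\varphi_4\varphi_3\varphi_2(q_0)=q_5$ says precisely that the point to which $\varphi_1$ contracts $L$ is carried by the intermediate links into the blow-up locus of $\varphi_5$. Hence in the composition, the contraction of $L$ by $\varphi_1$ is undone by the blow-up at $q_5$ by $\varphi_5$: the line $L$ is mapped birationally onto a real line in the target $\p^2$, and $\varphi$ has no real base-points. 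The symmetric argument applied to $\varphi^{-1}=\varphi_1^{-1}\cdots\varphi_5^{-1}$ shows $\varphi^{-1}$ also has no real base-points, so $\varphi\in \Aut(\p^2(\r))$.

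Third, I would count base-points of $\varphi$ on $\p^2$: two imaginary base-points from $\varphi_1$; at most two more from $\varphi_2$ and at most two more from $\varphi_3$ (each pair pulled back through the preceding links, possibly as infinitely near points); zero from $\varphi_4$ since it is a morphism; and the real base-point of $\varphi_5$ contributes nothing because it cancels against the contraction already performed by $\varphi_1$. The total is at most $6$ base-points, all imaginary. Since the imaginary pair introduced by $\varphi_1$ is not annihilated by any subsequent link, $\varphi$ does have base-points and so $\varphi\notin \PGL(3,\r)$. Lemma~\ref{Lem:AutP2des}(6) then forces the degree of $\varphi$ to be $5$.

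The main obstacle is the base-point bookkeeping in the third step: one must rule out any cancellation beyond the $q_0\leftrightarrow q_5$ one imposed by the hypothesis, so that the total count stays $\leq 6$. This is a finite check using the explicit description of each Sarkisov link in Example~\ref{Exa:Sarkisov}, but it requires careful tracking of how each pair of imaginary base-points propagates (as proper or infinitely near points on $\p^2$) through the chain.
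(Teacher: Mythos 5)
Your overall strategy coincides with the paper's: resolve each link, observe that the only real base-points occurring in the chain are the centre of $(\varphi_1)^{-1}$ and the centre of $\varphi_5$, use the hypothesis to cancel these against each other, conclude that $\varphi$ and $\varphi^{-1}$ have at most six base-points, all imaginary, and invoke Lemma~\ref{Lem:AutP2des}. Two steps, however, are not actually justified as written. The first concerns the middle links. A link of type $\mathrm{II}$ from $\d_6$ to $\d_6$ does \emph{not} automatically have imaginary base-points: Example~\ref{Exa:Sarkisov}(8) is exactly such a link centred at a real point, and if the $\d_6\dasharrow\d_6$ link in the chain were of that kind the conclusion would fail (compare Lemma~\ref{lem:Exchange}, which treats precisely that situation). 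It is this link that the hypothesis ``is an automorphism of $\d_6(\r)$'' is there to control, whereas the type $\mathrm{I}$ link $Q_{3,1}\dasharrow\d_6$ is automatically centred at two imaginary conjugate points by the classification. You have the roles reversed: you let the hypothesis do the (unnecessary) work on the type $\mathrm{I}$ link and then dismiss the $\d_6\dasharrow\d_6$ link by appealing only to Example~\ref{Exa:Sarkisov}(9), which does not exhaust the possibilities.

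The second gap is the exclusion of degree $1$. Lemma~\ref{Lem:AutP2des}(6) only gives degree $1$ or $5$, so you must prove $\varphi\notin\Aut_\r(\p^2)$, and your justification --- ``the imaginary pair introduced by $\varphi_1$ is not annihilated by any subsequent link'' --- is precisely the kind of cancellation statement you yourself flag at the end as the unfinished ``finite check.'' A priori the exceptional curves created upstream could be contracted downstream and the whole composition could collapse to a projectivity. The paper closes this with a one-line degree argument: the exceptional curves over the centres of the $\d_6\dasharrow\d_6$ link are sent onto \emph{conics} of $Q_{3,1}$, while $\varphi_5$ contracts only the two imaginary \emph{lines} through its centre; hence these curves are not contracted, $\varphi^{-1}$ contracts their images, and $\varphi$ is not linear. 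Some concrete argument of this sort is needed to complete your third step.
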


\begin{proof}
We  have the following commutative diagram, where each $\pi_i$ is the blow-up of two conjugate imaginary points and each $\eta_i$ is the blow-up of one real point. The two maps $(\varphi_2)^{-1}$ and $\varphi_4$ are also blow-ups of imaginary points.
$$\xymatrix@C=1cm{
&&&Y_2\ar[ld]_{\pi_2}\ar[rd]^{\pi_3}\\
&Y_1\ar[ldd]_{\pi_1}\ar[rd]^{\eta_1} & \d_6\ar[d]^{(\varphi_2)^{-1}}\ar@{-->}[rr]^{\varphi_3}&&\d_6\ar[d]^{\varphi_4}& Y_3\ar[ld]_{\eta_2}\ar[rdd]_{\pi_4}\\
&&Q_{3,1} && Q_{3,1}\ar@{-->}[rrd]^{\varphi_5}\\
\p^2\ar@{-->}[rru]^{\varphi_1}\ar@{-->}[rrrrrr]_{\varphi}&&&&&&\p^2.
}$$
The only real base-points are those blown-up by $\eta_1$ and $\eta_2$.
Since $\eta_2$ blows-up the image by $\varphi_4\varphi_3\varphi_2$ of the real point blown-up by $\eta_1$, the map $\varphi$ has at most $6$ base-points, all being imaginary, and the same holds for $\varphi^{-1}$. Hence, $\varphi$ is an automorphism of $\p^2(\r)$ with at most $6$ base-points. We can moreover see that $\varphi\not\in \Aut_\r(\p^2)$, since the two curves of $Y_2$ contracted by $\pi_2$ are sent by $\varphi_4\pi_3$ onto conics of $Q_{3,1}$, which are therefore not contracted by $\varphi_5$.

Lemma~\ref{Lem:AutP2des} implies that $\psi$ has degree $5$.
\end{proof}

\begin{prop}\label{Prop:Gen5}
The group $\Aut(\p^2(\r))$ is generated by  $\Aut_\r(\p^2)$ and by elements of  $\Aut(\p^2(\r))$ of degree $5$.
\end{prop}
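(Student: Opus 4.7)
My plan is to induct on the length of a Sarkisov decomposition of $\psi$, using Corollary~\ref{Cor:DecSimple} to produce it. Write $\psi = \varphi_n \circ \cdots \circ \varphi_1$ with each $\varphi_i \colon X_{i-1} \dasharrow X_i$ a link of type $\I$, $\III$, or a standard link of type $\II$, and $X_0 = X_n = \p^2$. The base case $n=0$ gives $\psi \in \Aut_\r(\p^2)$. For the inductive step, the goal is to peel off an initial segment which is itself an element of $\Aut(\p^2(\r))$ of degree $1$ or $5$; by Lemma~\ref{Lem:AutP2des}(6), this reduces to ensuring that initial segment has at most $6$ base-points.

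First I would reduce to the case where every intermediate surface $X_i$ lies in $\{\p^2, Q_{3,1}, \d_6\}$, avoiding the Hirzebruch surfaces. The motivation is that the only Sarkisov link out of $\p^2$ into a Hirzebruch surface is $\p^2 \to \F_1$ of type $\I$, which blows up a real point; since $\psi$ has no real base-points, any such real blow-up must be cancelled later. Invoking Lemma~\ref{Lem:Simple} (which permits rearrangement of standard type-$\II$ links among $\F_0,\F_1,\d_6$ while preserving regularity at real points), each Hirzebruch-detour starting and ending at $\p^2$ without introducing real base-points can be replaced by an equivalent detour through $Q_{3,1}$ and $\d_6$.

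Once $X_i \in \{\p^2, Q_{3,1}, \d_6\}$, the transition graph becomes restricted: $\p^2 \leftrightarrow Q_{3,1}$ (type $\II$, stereographic projection), $Q_{3,1} \leftrightarrow \d_6$ (types $\I$/$\III$), and $\d_6 \leftrightarrow \d_6$ (standard type $\II$). In particular, any return to $\p^2$ comes from $Q_{3,1}$. Let $k \ge 1$ be the smallest index with $X_k = \p^2$; the initial segment $\varphi_k \circ \cdots \circ \varphi_1$ has the shape
$$
\p^2 \longrightarrow Q_{3,1} \longrightarrow \d_6 \longrightarrow \cdots \longrightarrow \d_6 \longrightarrow Q_{3,1} \longrightarrow \p^2,
$$
with only pairs of conjugate imaginary points blown up along the way, so it is itself an element of $\Aut(\p^2(\r))$. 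I would then use Lemma~\ref{Lem:Simple} to arrange the middle $\d_6 \to \cdots \to \d_6$ loop to consist of a single standard link (or be trivial), landing in the situation of Lemma~\ref{Lem:FiveLinksDegree5}, which then identifies the segment as a degree-$5$ automorphism of $\p^2(\r)$ (or a shorter segment which is linear). Peeling this segment off gives a decomposition of strictly smaller length for the residual birational map, and induction finishes the proof.

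The main obstacle is the Hirzebruch-avoidance step: while the no-real-base-point condition morally forces $\F_n$-passages to contribute trivially, globally rerouting such passages through $Q_{3,1}$ and $\d_6$ requires constructing bridging links and checking that regularity at real points is preserved at each rearrangement. A secondary technical point is verifying that the $\d_6$-loop can be compressed to the exact length required by Lemma~\ref{Lem:FiveLinksDegree5}: the hypothesis that $\varphi_4\varphi_3\varphi_2$ sends one specified base-point to another must be arrangeable by composing with automorphisms of the fibration $\d_6 \to \p^1$, which is where the flexibility provided by the free choice of auxiliary imaginary points in Lemma~\ref{Lem:Simple}'s proof becomes essential.
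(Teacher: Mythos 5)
Your overall strategy (induct on the number of Sarkisov links and peel off a degree-$1$ or degree-$5$ initial segment via Lemma~\ref{Lem:FiveLinksDegree5}) is the same as the paper's, but the two steps you flag as ``technical'' are where the argument actually lives, and as proposed they do not go through. First, the Hirzebruch-avoidance step: Lemma~\ref{Lem:Simple} only rearranges fibration-preserving maps between $\F_0$, $\F_1$ and $\d_6$; it gives no way to reroute a segment $\p^2\dasharrow\F_1\dasharrow\cdots\dasharrow\F_1\dasharrow\p^2$ through $Q_{3,1}$ and $\d_6$, and ``the real blow-up must be cancelled later'' is not a proof. The paper avoids the problem entirely by invoking a property of the raw Sarkisov decomposition (not of the normalized decomposition of Corollary~\ref{Cor:DecSimple}): each base-point of a link $\varphi_i$ of type $\mathrm{I}$ or $\mathrm{II}$ is a base-point of $\varphi_r\cdots\varphi_i$. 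Since $\varphi$ has no real base-point, this forces $\varphi_1$ to be the type~$\mathrm{II}$ link $\p^2\dasharrow Q_{3,1}$, and the induction then keeps every subsequent decomposition away from the surfaces $\F_n$.

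Second, the initial segment $\p^2\dasharrow Q_{3,1}\dasharrow\d_6\dasharrow\cdots\dasharrow\d_6\dasharrow Q_{3,1}\dasharrow\p^2$ is not automatically in $\Aut(\p^2(\r))$: the first link contracts a real line onto a real point $r\in Q_{3,1}$ and the last link is a stereographic projection centered at a real point, so the segment is biregular on the real locus only if the middle links carry $r$ onto the center of that final projection --- precisely the matching hypothesis of Lemma~\ref{Lem:FiveLinksDegree5}, which you cannot simply assume. Moreover, even when this holds, a $\d_6$-loop of length at least $2$ produces more than $6$ base-points, and a composition of standard links of type $\mathrm{II}$ cannot in general be compressed to a single link, so Lemma~\ref{Lem:AutP2des}(6) does not apply to the whole segment. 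The paper's resolution is different: it does not use the tail of the given decomposition at all, but takes only the first $\d_6\dasharrow\d_6$ link $\varphi_3$ (after disposing of the cases where $\varphi_2$ or $\varphi_3$ meets the real point $r$, which you also omit), appends the freshly chosen links $\psi_4=(\varphi_2)^{-1}$ and the stereographic projection $\psi_5$ centered at the image of $r$, obtains a degree-$5$ map $\chi=\psi_5\psi_4\varphi_3\varphi_2\varphi_1$ by Lemma~\ref{Lem:FiveLinksDegree5}, and replaces $\varphi$ by $\varphi\chi^{-1}$, which has a strictly shorter decomposition. Without this ``build your own exit back to $\p^2$'' device the induction does not close.
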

\begin{proof}
Let us prove that any $\varphi\in \Aut(\p^2(\r))$ is generated by $\Aut_\r(\p^2)$ and elements of  $\Aut(\p^2(\r))$ of degree $5$ . We can assume that $\varphi\not\in \Aut_\r(\p^2)$, decompose it into Sarkisov links: $\varphi=\varphi_r\circ \cdots \circ \varphi_1$.

It follows from the construction of the links (see \cite{bib:Corti}) that if $\varphi_i$ is of type $\mathrm{I}$ or $\mathrm{II}$, each base-point of $\varphi_i$ is a base-point of $\varphi_r\dots\varphi_i$ (and in fact of maximal multiplicity). 

We proceed by induction on $r$.

Since $\varphi$ has no real base-point, the first link $\varphi_1$ is then of type $\mathrm{II}$ from $\p^2$ to $Q_{3,1}$, and $\varphi_r \cdots  \varphi_2$ has a unique real base-point $r\in Q_{3,1}$, which is the base-point of $(\varphi_1)^{-1}$. If $\varphi_2$ blows-up this point, then $\varphi_2\varphi_1\in \Aut_\r(\p^2)$. We can then assume that $\varphi_2$ is a link of type $\mathrm{I}$ from $Q_{3,1}$ to $\d_6$. The map $\varphi_2\varphi_1$ can be written as $\eta\pi^{-1}$, where $\pi\colon X\to \p^2$ is the blow-up of two pairs of imaginary points, say $p_1,\bar{p_1},p_2,\bar{p_2}$ and $\eta\colon X\to \d_6$ is the contraction of the strict transform of the real line passing through $p_1,\bar{p_1}$, onto a real point $q\in \d_6$.  Note that $p_1,\bar{p_1}$ are proper points of $\p^2$, blown-up by $\varphi_1$ and $p_2,\bar{p_2}$ either are proper base-points or are infinitely near to $p_1,\bar{p_1}$.

The fibration $\d_6\to \p^1$ corresponds to conics through $p_1,\bar{p_1},p_2,\bar{p_2}$.  If $\varphi_3$ is a link of type $\mathrm{III}$, then $\varphi_3\varphi_2$ is an automorphism of $Q_{3,1}$ and we decrease $r$. Then, $\varphi_3$ is of type $\mathrm{II}$. If $q$ is a base-point of $\varphi_3$, then $\varphi_3=\eta'\eta^{-1}$, where $\eta'\colon X\to \d_6$ is the contraction of  the strict transform of the line through $p_2,\bar{p_2}$. We can then write $\varphi_3\varphi_2\varphi_1$ into only two links, exchanging $p_1$ with $p_2$ and $\bar{p_1}$ with $\bar{p_2}$. The remaining case is when $\varphi_3$ is the blow-up of two imaginary points $p_3,\bar{p_3}$ of $\d_6$, followed by the contraction of the strict transforms of their fibres.

We denote by $q'\in \d_6(\r)$ the image of $q$ by $\varphi_3$, consider $\psi_4=(\varphi_2)^{-1}\colon \d_6\to Q_{3,1}$, which is a link of type $\mathrm{III}$, and write $\psi_5\colon Q_{3,1}\dasharrow \p^2$ the stereographic projection by $\psi_4(q')$, which is a link of type $\mathrm{II}$ centered at $\psi_4(q')$. By Lemma~\ref{Lem:FiveLinksDegree5}, the map $\chi=\psi_5\psi_4\varphi_3\varphi_2\varphi_1$ is an element of $\Aut(\p^2(\r))$ of degree $5$. Since $\varphi\chi^{-1}$ decomposes into one link less than $\varphi$, this concludes the proof by induction.\end{proof}
\begin{proof}[Proof of Theorem~$\ref{ThmRongaVust}$]
By Proposition~\ref{Prop:Gen5}, $\Aut(\p^2(\r))$ is generated by  $\Aut_\r(\p^2)$ and by elements of  $\Aut(\p^2(\r))$ of degree $5$. Thanks to Lemma~\ref{Lem:Quinticgenstd}, $\Aut(\p^2(\r))$ is indeed generated by projectivities and standard quintic transformations.\end{proof}

\section{Generators of the group $\Bir_\r(\p^2)$}\label{BirP2R}

\begin{lem}\label{lem:Exchange}
Let $\varphi\colon Q_{3,1}\dasharrow Q_{3,1}$ be a birational map, that decomposes as $\varphi=\varphi_3\varphi_2\varphi_1$, where $\varphi_i\colon X_{i-1}\dasharrow X_i$ is a Sarkisov link for each $i$, where $X_0=Q_{3,1}=X_2$, $X_1=\d_6$. If $\varphi_2$ has a real base-point, then, $\varphi$ can be written as $\varphi=\psi_2\psi_1$, where $\psi_1,(\psi_2)^{-1}$ are links of type $\mathrm{II}$ from $Q_{3,1}$ to $\p^2$.
\end{lem}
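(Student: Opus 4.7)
My strategy is to construct both $\psi_1$ and $\psi_2$ as explicit stereographic projections. Let $p \in \d_6$ be the real base-point of $\varphi_2$ and $p^* \in \d_6$ the real base-point of $\varphi_2^{-1}$. Because $\varphi_1^{-1}\colon \d_6 \to Q_{3,1}$ contracts only imaginary curves, the real point $p$ does not lie in its exceptional locus, so $p' := \varphi_1^{-1}(p)$ is a real point of $Q_{3,1}$ distinct from the imaginary base-points $p_1, \bar{p_1}$ of $\varphi_1$. Similarly $p'' := \varphi_3(p^*)$ is a real point of the target $Q_{3,1}$. Take $\psi_1\colon Q_{3,1} \dashrightarrow \p^2$ to be the stereographic projection from $p'$ (a link of type $\II$ by Example~\ref{Exa:Sarkisov}(2)); it then suffices to prove that $\psi_2 := \varphi \circ \psi_1^{-1}$ is again a link of type $\II$, in which case it is the inverse of the stereographic projection from $p''$.

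The heart of the argument is to identify the imaginary curves contracted by $\varphi$ on $Q_{3,1}$. Denote by $F \subset \d_6$ the fibre through $p$, by $F^* \subset \d_6$ the fibre through $p^*$, and by $L', \bar{L'}$ the two imaginary $(-1)$-sections of the target $\d_6$ contracted by $\varphi_3$ onto $p_2, \bar{p_2}$. On the common resolution $Z = \operatorname{Bl}_p(\d_6)$ of $\varphi_2$, with exceptional divisor $E_p$ over $p$ and with $\sigma_2\colon Z \to \d_6$ (target) contracting the strict transform $\tilde F$, the identity $\sigma_2^*(F^*) = E_p + \tilde F$ gives
\[
\tilde{L'} \cdot E_p \;=\; \tilde L' \cdot \sigma_2^*(F^*) - \tilde L' \cdot \tilde F \;=\; L' \cdot F^* - 0 \;=\; 1.
\]
Pushing $\tilde L'$ down to $\d_6$ (source) produces a section of $\d_6 \to \p^1$ of self-intersection $-1 + 1^2 = 0$ passing through $p$. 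Via $\varphi_1^{-1}$ this is an imaginary line of $Q_{3,1}$ through $p'$ missing $p_1, \bar{p_1}$---necessarily one of the two tangent lines $T, \bar T$ at $p'$. The same reasoning applied to $\bar{L'}$ identifies the other. Hence $\varphi$ contracts exactly the real conic $\varphi_1^{-1}(F)$ through $p_1, \bar{p_1}, p'$ (onto $p''$) together with the two tangent lines $T, \bar T$ (onto $p_2, \bar{p_2}$).

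Given this identification, verifying that $\psi_2$ is a link of type $\II$ is straightforward. The base-points of $\psi_1^{-1}$ are the conjugate imaginary points $t, \bar t \in \p^2$, whose exceptional divisors in the resolution of $\psi_1$ correspond to the tangent lines $T, \bar T$; since $\varphi$ contracts $T, \bar T$ to points, $\psi_2$ is defined at $t$ and $\bar t$, so these are not base-points of $\psi_2$. The only remaining base-points are $\psi_1(p_1)$ and $\psi_1(\bar{p_1})$, a conjugate imaginary pair. Because $\psi_1^{-1}$ has degree $2$, the real line through these two points pulls back to the unique conic on $Q_{3,1}$ through $p_1, \bar{p_1}, p'$---namely $\varphi_1^{-1}(F)$, which $\varphi$ contracts to $p''$. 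Thus $\psi_2$ blows up two conjugate imaginary points of $\p^2$ and contracts the real line through them to the real point $p'' \in Q_{3,1}$, exactly as in a link of type $\II$. The main obstacle is the intersection computation $\tilde L' \cdot E_p = 1$ that pinpoints the imaginary contracted curves of $\varphi$; once in hand, the base-point analysis yields the desired decomposition $\varphi = \psi_2 \psi_1$.
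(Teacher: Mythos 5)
Your proof is correct and follows essentially the same route as the paper: both take $\psi_1$ to be the stereographic projection from the real base-point $q=p'$ of $\varphi$, identify the three contracted curves as the real conic through $p_1,\bar{p_1},q$ and the two imaginary lines of $Q_{3,1}$ through $q$, and then read off that $\psi_2=\varphi\psi_1^{-1}$ blows up two conjugate imaginary points and contracts the real line joining them. The only difference is that you supply the intersection computation $\tilde L'\cdot E_p=1$ (where the paper simply asserts the identification of the contracted sections with the tangent lines at $q$); just note in passing that $\tilde L'\cdot\tilde F=0$ because $L'$ and $\bar{L'}$ are disjoint conjugates and hence cannot pass through the real point $p^{*}$.
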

\begin{proof}
We  have the following commutative diagram, where each of the maps $\eta_1,\eta_2$ blow-ups a real point, and each of the maps $(\varphi_1)^{-1},\varphi_3$ is the blow-up of two conjugate imaginary points. 
$$\xymatrix@C=1cm{
&&&Y\ar[ld]_{\eta_1}\ar[rd]^{\eta_2}\\
&& \d_6\ar[d]^{(\varphi_1)^{-1}}\ar@{-->}[rr]^{\varphi_2}&&\d_6\ar[d]^{\varphi_3}& \\
&&Q_{3,1} \ar@{-->}[rr]^{\varphi}&& Q_{3,1}.
}$$
The map $\varphi$ has thus exactly three base-points, two of them being imaginary and one being real; we denote them by $p_1,\bar{p_1}$, $q$.
The fibres of the Mori fibration $\d_6\to \p^1$ correspond to conics of $Q_{3,1}$ passing through the points $p_1,\bar{p_1}$. The real curve contracted by $\eta_2$ is thus the strict transform of the conic $C$ of $Q_{3,1}$ passing through $p_1,\bar{p_1}$ and $q$. The two curves contracted by $\varphi_3$ are the two imaginary sections of self-intersection $-1$, which corresponds to the strict transforms of the two imaginary lines $L_1,L_2$ of $Q_{3,1}$ passing through $q$.

We can then decompose $\varphi$ as the blow-up of $p_1,p_2,q$, followed by the contraction of the strict transforms of $C,L_1,L_2$. Denote by $\psi_1\colon Q_{3,1}\dasharrow \p^2$ the link of type $\mathrm{II}$ centered at $q$, which is the blow-up of $q$ followed by the contraction of the strict transform of $L_1,L_2$, or equivalenty the stereographic projection centered at $q$. The curve $\psi_1(C)$ is a real line of $\p^2$, which contains the two points $\psi_1(p_1)$, $\psi_1(\bar{p_1})$. The map $\psi_2=\varphi(\psi_1)^{-1}\colon \p^2\dasharrow Q_{3,1}$ is then the blow-up of these two points, followed by the contraction of the line passing through both of them. It is then a link of type $\mathrm{II}$.
\end{proof}

\begin{proof}[Proof of Theorem~$\ref{thm:BirP2}$]
Let us prove that any $\varphi\in \Bir_\r(\p^2)$ is in the group generated by $\Aut_\r(\p^2)$, $\sigma_0$, $\sigma_1$, and standard quintic transformations of $\p^2$. We can assume that $\varphi\not\in \Aut_\r(\p^2)$, decompose it into Sarkisov links: $\varphi=\varphi_r\circ \cdots \circ \varphi_1$. By Corollary~\ref{Cor:DecSimple}, we can assume that all the $\varphi_i$ are  links of type $\mathrm{I},\mathrm{III}$, or standard links of type $\mathrm{II}$.

We proceed by induction on $r$, the case $r=0$ being obvious.

Note that $\varphi_1$ is either a link of type $\mathrm{I}$ from $\p^2$ to $\mathbb{F}_1$, or a link of type $\mathrm{II}$ from $\p^2$ to $Q_{3,1}$. We now study the possibilities for the base-points of $\varphi_1$ and the next links:

$(1)$ Suppose that $\varphi_1\colon \p^2\dasharrow \F_1$ is a link of type $\mathrm{I}$, and that $\varphi_2$ is a link $\F_1\dasharrow \F_1$. Then, $\varphi_2$ blows-up two imaginary base-points of $\F_1$, not lying on the exceptional curve. Hence, $\psi=(\varphi_1)^{-1}\varphi_2\varphi_1$ is a quadratic transformation of $\p^2$ with three proper base-points, one real and two imaginary. It is thus equal to $\alpha\sigma_1\beta$ for some $\alpha,\beta\in \Aut_\r(\p^2)$. Replacing $\varphi$ with $\varphi\psi^{-1}$, we obtain a decomposition with less Sarkisov links, and conclude by induction.

$(2)$  Suppose that $\varphi_1\colon \p^2\dasharrow \F_1$ is a link of type $\mathrm{I}$, and that $\varphi_2$ is a link $\F_1\dasharrow \F_0$. Then, $\varphi_2\varphi_1$ is the blow-up of two real points $p_1,p_2$ of $\p^2$ followed by the contraction of the line through $p_1,p_2$. The exceptional divisors of $p_1,p_2$ are two $(0)$-curves of $\F_0=\p^1\times \p^1$, intersecting at one real point. 

$(2a)$ Suppose first that $\varphi_3$ has a base-point which is real, and not lying on $E_1,E_2$. Then, $\psi=(\varphi_1)^{-1}\varphi_3\varphi_2\varphi_1$ is a quadratic transformation of $\p^2$ with three proper base-points, all real. It is thus equal to $\alpha\sigma_0\beta$ for some $\alpha,\beta\in \Aut_\r(\p^2)$. Replacing $\varphi$ with $\varphi\psi^{-1}$, we obtain a decomposition with less Sarkisov links, and conclude by induction. 

$(2b)$ Suppose now that $\varphi_3$ has imaginary base-points, which are $q,\bar{q}$. Since $\varphi_3$ is a standard link of type $\mathrm{II}$, it goes from $\F_0$ to $\F_0$, so $q$ and $\bar{q}$ do not lie on a $(0)$-curve, and then do not belong to the curves $E_1,E_2$.
We can then decompose $\varphi_2\varphi_3\colon \F_1\dasharrow \F_2$ into a Sarkisov link centered at two imaginary points, followed by a Sarkisov link centered at a real point. This reduces to case $(1)$, already treated before.

$(2c)$ The remaining case (for $(2)$) is when $\varphi_3$ has a base-point $p_3$ which is real, but lying on $E_1$ or $E_2$. We choose a general real point $p_4\in \F_0$, and denote by $\theta\colon \F_0\dasharrow \F_1$ a Sarkisov link centered at $p_4$. We observe that $\psi=(\varphi_1)^{-1}\theta\varphi_2\varphi_1$ is a quadratic map as in case $(2a)$, and that $\varphi\psi^{-1}=\varphi_n\dots\varphi_3\theta^{-1}\varphi_1$ admits now a decomposition of the same length, but which is in case $(2a)$.

$(3)$ Suppose now that $\varphi_1\colon \p^2\dasharrow Q_{3,1}$ is a link of type $\mathrm{II}$ and that $\varphi_2$ is a link of type $\mathrm{II}$ from $S$ to $\p^2$. If  $\varphi_2$ and $(\varphi_1)^{-1}$ have the same real base-point, the map $\varphi_2\varphi_1$ belongs to $\Aut_\r(\p^2)$. Otherwise, $\varphi_2\varphi_1$ is a quadratic map with one unique real base-point $q$ and two imaginary base-points. It is then equal to  $\alpha\sigma_0\beta$ for some $\alpha,\beta\in \Aut_\r(\p^2)$. We conclude as before by induction hypothesis.

$(4)$ Suppose that $\varphi_1\colon \p^2\dasharrow Q_{3,1}$ is a link of type $\mathrm{II}$ and $\varphi_2$ is a link of type $\mathrm{I}$ from $S$ to $\d_6$. 
If $\varphi_3$ is a Sarkisov link of type $\mathrm{III}$, then $\varphi_3\varphi_2$ is an automorphism of $Q_{3,1}$, so we can decrease the length. We only need to consider the case where $\varphi_3$ is a link of type $\mathrm{II}$ from $\d_6$ to $\d_6$. If $\varphi_3$ has a real base-point, we apply Lemma~\ref{lem:Exchange} to write $(\varphi_2)^{-1}\varphi_3\varphi_2=\psi_2\psi_1$ where $\psi_1,(\psi_2)^{-1}$ are links  $Q_{3,1}\dasharrow \p^2$. By $(3)$, the map $\chi=\psi_1\varphi_1$ is generated by $\Aut_\r(\p^2)$ and $\sigma_0$. We can then replace $\varphi$ with $\varphi\chi^{-1}=\varphi_r\cdots \varphi_3\varphi_2(\psi_1)^{-1}=\varphi_r\cdots \varphi_4\varphi_2\psi_2$, which has a shorter decomposition. The last case is when $\varphi_3$ has two imaginary base-points. We denote by $q\in Q_{3,1}$ the real base-point of $(\varphi_1)^{-1}$, write $q'=(\varphi_2)^{-1}\varphi_3\varphi_2(q)\in Q_{3,1}$ and denote by $\psi\colon Q_{3,1}\dasharrow \p^2$ the stereographic projection centered at $q'$. By Lemma~\ref{Lem:FiveLinksDegree5}, the map $\chi=\psi(\varphi_2)^{-1}\varphi_3\varphi_2\varphi_1$ is an automorphism of $\p^2(\r)$ of degree $5$, which is generated by $\Aut_\r(\p^2)$ and standard automorphisms of $\p^2(\r)$ of degree $5$ (Lemma~\ref{Lem:Quinticgenstd}). We can thus replace $\varphi$ with $\varphi\chi^{-1}$, which has a decomposition of shorter length.
\end{proof}

\section{Generators of the group $\Aut(Q_{3,1}(\r))$}\label{Q31}

\begin{example}\label{Exa:CubicStd}
Let $p_1,\bar{p_1},p_2,\bar{p_2}\in Q_{3,1}\subset \p^3$ be two pairs of conjugate imaginary points, not on the same plane of $\p^3$. Let $\pi\colon X\to Q_{3,1}$ be the blow-up of these points. The imaginary plane of $\p^3$ passing through $p_1,\bar{p_2},\bar{p_2}$ intersects $Q_{3,1}$ onto a conic, having self-intersection $2$: two general different conics on $Q_{3,1}$ are the trace of hyperplanes, and intersect then into two points, being on the line of intersection of the two planes. The strict transform of this conic on $S$ is thus a $(-1)$-curve on $S$. Doing the same for the other conics passing through $3$ of the points $p_1,\bar{p_1},p_2,\bar{p_2}$, we obtain four disjoint $(-1)$-curves on $X$, that we can contract in order to obtain a birational morphism $\eta\colon X\to Q_{3,1}$; note that the target is $Q_{3,1}$ because it is a smooth projective rational surface of Picard rank $1$. We obtain then a birational map $\psi=\eta\pi^{-1}\colon Q_{3,1}\dasharrow Q_{3,1}$ inducing an isomorphism $Q_{3,1}(\r)\to Q_{3,1}(\r)$.

Denote by $H\subset Q_{3,1}$ a general hyperplane section. The strict transform of $H$ on $X$ by $\pi^{-1}$ has self-intersection $2$ and has intersection $2$ with the $4$ curves contracted. The image $\psi(H)$ has thus multiplicity $2$  and   self-intersection $18$; it is then the trace of a cubic section. The construction of $\psi$ and $\psi^{-1}$ being similar, the linear system of $\psi$ consists of cubic sections with multiplicity $2$ at $p_1,\bar{p_1},p_2,\bar{p_2}$.

\end{example}

\begin{example}\label{Exa:CubicSpe}
Let $p_1,\bar{p_1}\in Q_{3,1}\subset \p^3$ be two conjugate imaginary points and let $\pi_1\colon X_1\to Q_{3,1}$ be the blow-up of the two points. Denote by $E_1,\bar{E_1}\subset X_1$ the curves contracted onto $p_1,\bar{p_1}$ respectively.  Let $p_2\in E_1$ be a point, and $\bar{p_2}\in \bar{E_1}$ its conjugate.  We assume that there is no conic of $Q_{3,1}\subset \p^3$ passing through $p_1,\bar{p_1},p_2,\bar{p_2}$ and let $\pi_2\colon X_2\to X_1$ be the blow-up of $p_2,\bar{p_2}$.

On $X$, the strict transforms of the two conics $C,\bar{C}$ of $\p^2$, passing through $p_1,\bar{p_1},p_2$ and $p_1,\bar{p_1},\bar{p_2}$ respectively, are imaginary conjugate disjoint $(-1)$ curves. The contraction of these two curves gives a birational morphism $\eta_2\colon X_2\to Y_1$. On this latter surface, we find two disjoint conjugate imaginary $(-1)$-curves. These are the strict transforms of the exceptional curves associated to $p_1,\bar{p_1}$. The contraction of these curves gives a birational morphism $\eta_1\colon Y_1\to Q_{3,1}$.  The birational map $\psi=\eta_1\eta_2(\pi_1\pi_2)^{-1}\colon Q_{3,1}\dasharrow Q_{3,1}$ induces an isomorphism $Q_{3,1}(\r)\to Q_{3,1}(\r)$.

\end{example}

\begin{dfn}
The birational maps of $Q_{3,1}$ of degree $3$ obtained in Example~\ref{Exa:CubicStd} will be called \emph{standard cubic transformations} and those of Example~\ref{Exa:CubicSpe} will be called \emph{special cubic transformations}. 
\end{dfn}

Note that since $\Pic(Q_{3,1})=\mathbb{Z}H$, where $H$ is an hyperplane section, we can associate to any birational map $Q_{3,1}\dasharrow Q_{3,1}$, an integer $d$, which is the \emph{degree of the map}, such that $\psi^{-1}(H)=dH$.
\begin{lem}
Let $\psi\colon Q_{3,1}\dasharrow Q_{3,1}$ be a birational map inducing an isomorphism $Q_{3,1}(\r)\to Q_{3,1}(\r)$. The following hold:
\begin{enumerate}
\item
The degree of $\psi$ is $2k+1$ for some integer $k\ge 0$.
\item
If $\psi$ has degree $1$, it belongs to $\Aut_\r(Q_{3,1})=\PO(3,1)$.
\item
If $\psi$ has degree $3$, then it is a standard or special cubic transformation, described in Examples~$\ref{Exa:CubicStd}$ and $\ref{Exa:CubicSpe}$, and has thus exactly $4$ base-points.
\item
If $\psi$ has at most $4$ base-points, then $\psi$ has degree $1$ or $3$.
\end{enumerate}
\end{lem}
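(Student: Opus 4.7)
The plan is to mirror the proof of Lemma~\ref{Lem:AutP2des}, replacing the planar data by the analogous one for $Q_{3,1}$. First I would set up the Noether-type equalities on a resolution of $\psi$: using $H^2 = 2$ and $K_{Q_{3,1}} = -2H$ (so $K \cdot H = -4$), equating the self-intersection and anticanonical degree of the linear system $|dH - \sum m_i E_i|$ with those of $|H|$ on the target gives
\[
\sum_{i=1}^k m_i = 4(d-1) \quad\text{and}\quad \sum_{i=1}^k m_i^2 = 2(d^2-1).
\]

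Next I would prove that every $m_i$ is even. Since $\psi$ and $\psi^{-1}$ are defined at every real point, no base-point is real and no contracted curve is real (otherwise its image would be a real base-point of the inverse), so base-points and contracted curves each come in conjugate imaginary pairs. For a pair $C,\bar C$ of conjugate contracted curves, write their classes in $\Pic(Q_{3,1,\c}) = \z L_1 \oplus \z L_2$ as $aL_1 + bL_2$ and $bL_1 + aL_2$ (the two rulings are exchanged by conjugation); then $C \cdot \bar C = a^2 + b^2$, which must be even because the intersection contains no real point, so $a + b$ is even and $\deg C = C \cdot H = a + b$ is even. Since the multiplicity of $\psi^{-1}$ at the image of a contracted curve equals its degree, all multiplicities of $\psi^{-1}$ are even, and by symmetry the same holds for all $m_i$. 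Writing $m_i = 2n_i$ in $\sum m_i^2 = 2(d^2-1)$ yields $2 \sum n_i^2 = d^2 - 1$, forcing $d$ odd and proving (1).

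If $d = 1$ the linear system has no base-point, so $\psi$ extends to a linear automorphism of $\p^3$ preserving $Q_{3,1}$, proving (2). For (4), Cauchy--Schwarz gives $16(d-1)^2 \le 2k(d^2-1)$, so $k \le 4$ yields $8(d-1) \le 4(d+1)$ and hence $d \le 3$; combined with $d$ odd, $d \in \{1,3\}$. For (3), when $d=3$ the Noether equalities become $\sum m_i = 8$ and $\sum m_i^2 = 16$; with all $m_i$ positive and even, the only solution is four base-points of multiplicity $2$, forming two conjugate imaginary pairs $p_1,\bar p_1$ and $p_2,\bar p_2$. If all four are proper, no conic of $Q_{3,1}$ passes through all four (such a conic would have strictly negative free intersection with the linear system and become a fixed component), matching Example~\ref{Exa:CubicStd}; up to conjugation the only remaining configuration has $p_2,\bar p_2$ in the first neighbourhood of $p_1,\bar p_1$, matching Example~\ref{Exa:CubicSpe}.

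The main step requiring care is the identification of the multiplicity of $\psi^{-1}$ at the image of a contracted curve with the degree of that curve, which needs attention when infinitely near base-points are present; once this standard Cremona-geometry fact is in hand together with the intersection-parity argument on $Q_{3,1,\c}$, the remaining steps are bookkeeping transcribed from the planar case.
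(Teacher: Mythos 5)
Your proposal is correct and follows the same skeleton as the paper's proof: the Noether-type equalities $\sum a_i = 4(d-1)$, $\sum a_i^2 = 2(d^2-1)$ coming from $H^2=2$ and $K_{Q_{3,1}}\cdot H=-4$, Cauchy--Schwarz for part (4), and the same case analysis (no conic through the four points; proper versus infinitely near pairs) for part (3). The one place where you genuinely diverge is part (1): you transplant the full ``even multiplicities'' argument from Lemma~\ref{Lem:AutP2des}, which requires knowing that conjugation exchanges the two rulings of $Q_{3,1,\c}$ (true, since $\rho(Q_{3,1})=1$ over $\r$), that $C\cdot\bar C$ is even for a conjugate pair of contracted curves, and the identification of the multiplicity of $\psi^{-1}$ at the image of a contracted curve with its degree --- the step you rightly flag as delicate for infinitely near points. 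The paper avoids all of this: it only uses that conjugate base-points have equal multiplicities, so that $\sum_{i=1}^m a_i^2 = d^2-1$ and $\sum_{i=1}^m a_i = 2(d-1)$ after halving, and then $a_i^2\equiv a_i\pmod 2$ gives $d^2-1\equiv 0\pmod 2$ directly. Your route proves a slightly stronger intermediate statement (evenness of each multiplicity, the analogue of part (2) of Lemma~\ref{Lem:AutP2des}) at the cost of the extra geometric input; the paper's route is cheaper and suffices for the statement as given. Everything else matches.
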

\begin{proof}
Denote by $d$ the degree of $\psi$ and by $a_1,\dots,a_n$ the multiplicities of the base-points of $\psi$. Denote by  $\pi\colon X\to Q_{3,1}$ the blow-up of the base-points, and by $E_1,\dots,E_n\in \Pic(X)$ the divisors being the total pull-back of the exceptional $(-1)$-curves obtained after blowing-up the points. Writing $\eta\colon X\to Q_{3,1}$ the birational morphism $\psi\pi$, we obtain 
$$\begin{array}{rcl}
\eta^{*}(H)&=&d\pi^*(H)-\sum_{i=1}^n a_i E_i\\
K_X&=&\pi^{*}(-2H)+\sum_{i=1}^n E_i.\end{array}$$ 
Since $H$ corresponds to a smooth rational curve of self-intersection $2$, we have $(\eta^{*}(H))^2$ and $\eta^{*}(H)\cdot K_X=-4$. We find then 
$$\begin{array}{rcrcl}
2&=&(\eta^{*}(H))^2&=&2d^2-\sum_{i=1}^n (a_i)^2\\
4&=&-K_X \cdot \eta^{*}(H)&=&4d-\sum_{i=1}^n a_i.
\end{array}$$
Since multiplicities come by pairs, $n=2m$ for some integer $m$ and we can order the $a_i$ so that $a_{i}=a_{n+1-i}$ for $i=1,\dots,m$. This yields
$$\begin{array}{rcl}
d^2-1&=&\sum_{i=1}^m (a_i)^2\\
2(d-1)&=&\sum_{i=1}^m a_i
\end{array}$$
Since $(a_i)^2\equiv a_i\pmod 2$, we find $d^2-1\equiv 2(d-1)\equiv0 \pmod{2}$, hence $d$ is odd. This gives $(1)$.

If the number of base-points is at most $4$, we can choose $m=2$, and obtain by Cauchy-Schwartz
$$4(d-1)^2=\left(\sum_{i=1}^m a_i\right)^2\le m\sum_{i=1}^m (a_i)^2=m (d^2-1)=2(d^2-1).$$
 This yields $2(d-1)\le d+1$, hence $d\le 3$.

If $d=1$, all $a_i$ are zero, and $\psi\in \Aut_\r(Q_{3,1})$.

If $d=3$, we get $\sum_{i=1}^m (a_i)^2=8$, $\sum_{i=1}^m a_i=4$, so  $m=2$ and $a_1=a_2=2$. Hence, the base-points of $\psi$ consist of two pairs of conjugate imaginary points $p_1,\bar{p_1},p_2,\bar{p_2}$. Moreover, if a conic passes through $3$ of the points, its free intersection with the linear system is zero, so it is contracted by $\psi$, and there is no conic through the four points.

$(a)$ If the four points belong to $Q_{3,1}$, the map is a standard cubic transformation, described in Example~$\ref{Exa:CubicStd}$.

$(b)$ If two points are infinitely near, the map is a special cubic transformation, described in Example~$\ref{Exa:CubicSpe}$.
\end{proof}
\begin{lem}\label{Lem:ThreeLinksDegree3}
Let $\varphi\colon Q_{3,1}\dasharrow Q_{3,1}$ be a birational map, that decomposes as $\varphi=\varphi_3\varphi_2\varphi_1$, where $\varphi_i\colon X_{i-1}\dasharrow X_i$ is a Sarkisov link for each $i$, where $X_0=Q_{3,1}=X_2$, $X_1=\d_6$. If $\varphi_2$ is an automorphism of $\d_6(\r)$ then $\varphi$ is a cubic automorphism of $Q_{3,1}(\r)$ of degree $3$ described in  in Examples~$\ref{Exa:CubicStd}$ and~$\ref{Exa:CubicSpe}$. Moreover, $\varphi$ is a standard cubic transformation if and only if the link $\varphi_2$ of type $\mathrm{II}$ is a standard link of type $\mathrm{II}$.
\end{lem}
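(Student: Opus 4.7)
The plan is to identify the four base-points of $\varphi$ explicitly, invoke the preceding lemma bounding the degree in terms of the base-point count, and then read off the standard/special dichotomy from where the base-points of $\varphi_2$ sit on $\d_6$.

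First I would unwind the three links. Since $Q_{3,1}$ has Picard rank one and $\d_6$ has Picard rank two, the map $\varphi_1$ must be of type~$\I$, and its inverse morphism is the projection $\d_6\to Q_{3,1}$ of Example~\ref{Exa:Sarkisov}(3), contracting a pair of conjugate imaginary sections $E,\bar E$ of the conic bundle $\d_6\to\p^1$ of self-intersection $-1$ onto two imaginary points $p_1,\bar p_1\in Q_{3,1}$. Dually, $\varphi_3$ is of type~$\III$ and contracts the analogous pair of sections in the target copy of $\d_6$. The middle link $\varphi_2$ is of type~$\II$ with no real base-point (being an automorphism of $\d_6(\r)$), so by Example~\ref{Exa:Sarkisov}(9) it is the blow-up of two conjugate imaginary points $p_2,\bar p_2\in\d_6$ followed by the contraction of the strict transforms of the fibres of $\d_6\to\p^1$ through them.

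Next I would count base-points. Since $\varphi_3$ is a morphism, the base-points of $\varphi$ are those of $\varphi_1$ (namely $p_1,\bar p_1$) together with the preimages under the morphism $\varphi_1^{-1}$ of $p_2,\bar p_2$. If $p_2\notin E$ then $\bar p_2\notin\bar E$, and $\varphi_1^{-1}(p_2),\varphi_1^{-1}(\bar p_2)$ are two proper points of $Q_{3,1}$ distinct from $p_1,\bar p_1$, so $\varphi$ has four proper imaginary base-points. If instead $p_2\in E$, then $p_2$ corresponds to a direction at $p_1$, i.e.\ an imaginary point infinitely near $p_1$, and similarly for $\bar p_2$. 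Either way $\varphi$ has exactly four imaginary base-points, so it is not in $\Aut_\r(Q_{3,1})$, and the preceding lemma forces $\deg\varphi=3$. It must then be one of the transformations listed in Example~\ref{Exa:CubicStd} or~\ref{Exa:CubicSpe}.

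For the moreover part, recall that the two exceptional sections of self-intersection $-1$ of $\d_6\to\p^1$ are exactly $E$ and $\bar E$. By definition $\varphi_2$ is standard if and only if $p_2,\bar p_2\notin E\cup\bar E$, which by the previous paragraph is exactly the case where $\varphi$ has four proper base-points on $Q_{3,1}$. That is precisely the configuration defining a standard cubic transformation (Example~\ref{Exa:CubicStd}), giving the claimed equivalence; conversely $\varphi_2$ special corresponds to the infinitely near configuration of Example~\ref{Exa:CubicSpe}. The main obstacle I expect is just the careful geometric identification of the two exceptional sections of $\d_6\to\p^1$ with the exceptional divisors of $\varphi_1^{-1}$, and matching ``$p_2$ lies on $E$'' with ``infinitely near base-point of $\varphi$''; once those identifications are in place the rest is a direct base-point count plus an appeal to the previous lemma.
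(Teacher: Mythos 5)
Your proof is correct and follows essentially the same route as the paper's: resolve the three links into blow-ups of pairs of conjugate imaginary points, conclude that $\varphi$ has (at most) four imaginary base-points, invoke the preceding degree lemma to force degree $3$, and read off standard versus special according to whether the base-points of $\varphi_2$ lie on the two exceptional $(-1)$-sections, which you correctly identify with the exceptional curves of $\varphi_1^{-1}$. The only slight difference is in how one rules out $\varphi\in\Aut_\r(Q_{3,1})$: the paper notes explicitly that the exceptional curves over the base-points of $\varphi_2$ are sent onto conics of $Q_{3,1}$ contracted by $\varphi^{-1}$, while you deduce it from the four base-points being genuine (i.e., not resolvable in the composition), which amounts to the same verification.
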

\begin{proof}
We  have the following commutative diagram, where each of the maps $\pi_1,\pi_2,(\varphi_1)^{-1},\varphi_3$ is the blow-up of two conjugate imaginary points. 
$$\xymatrix@C=1cm{
&&&Y\ar[ld]_{\pi_1}\ar[rd]^{\pi_2}\\
&& \d_6\ar[d]^{(\varphi_1)^{-1}}\ar@{-->}[rr]^{\varphi_2}&&\d_6\ar[d]^{\varphi_3}& \\
&&Q_{3,1} \ar@{-->}[rr]^{\varphi}&& Q_{3,1}.
}$$
Hence, $\varphi$ is an automorphism of $\p^2(\r)$ with at most $4$ base-points. We can moreover see that $\varphi\not\in \Aut_\r(Q_{3,1})$, since the two curves of $Y_2$ contracted blown-up by $\pi_2$ are sent by $\varphi_3\pi_3$ onto conics of $Q_{3,1}$, contracted by $\varphi^{-1}$.

Lemma~\ref{Lem:AutP2des} implies that $\varphi$ is cubic automorphism of $Q_{3,1}(\r)$ of degree $3$ described in  in Examples~$\ref{Exa:CubicStd}$ and~$\ref{Exa:CubicSpe}$. In particular, $\varphi$ has exactly four base-points, blown-up by $(\varphi_1)^{-1}\pi_1$. Moreover, $\varphi$ is a standard cubic transformation if and only these four points are proper base-points of $Q_{3,1}$. This corresponds to saying that the two base-points of $\varphi_2$ do not belong to the exceptional curves contracted by $(\varphi_1)^{-1}$, and is thus the case exactly when $\varphi_2$ is a standard link of type $\mathrm{II}$.
\end{proof}

\begin{proof}[Proof of Theorem~$\ref{Thmkm}$]
Let us prove that any $\varphi\in \Aut(Q_{3,1}(\r))$ is generated by $\Aut_\r(Q_{3,1})$ and standard cubic transformations of  $\Aut(Q_{3,1}(\r))$ of degree $3$. We can assume that $\varphi\not\in \Aut_\r(\p^2)$, decompose it into Sarkisov links: $\varphi=\varphi_r\circ \cdots \circ \varphi_1$.

As already explained in the proof of Proposition~\ref{Prop:Gen5}, if $\varphi_i$ is of type $\mathrm{I}$ or $\mathrm{II}$, each base-point of $\varphi_i$ is a base-point of $\varphi_r\dots\varphi_i$. This implies that all links are either of type $\mathrm{I}$, from $Q_{3,1}$ to $\d_6$, of type $\mathrm{II}$ from $\d_6$ to $\d_6$ with imaginary base-points, or of type $\mathrm{III}$ from $\d_6$ to $Q_{3,1}$. Moreover, by Lemma~\ref{Lem:Simple}, we can assume that all links of type $\mathrm{II}$ are standard.

We proceed by induction on $r$.

Since $\varphi$ has no real base-point, the first link $\varphi_1$ is then of type $\mathrm{I}$ from $Q_{3,1}$ to $\d_6$. If $\varphi_2$ is of type $\mathrm{III}$, then $\varphi_2\varphi_1\in \Aut_\r(Q_{3,1})$. We replace these two links and conclude by induction. If $\varphi_2$ is a standard link of type $\mathrm{II}$, then $\psi=(\varphi_1)^{-1}\varphi_2\varphi_1$ is a standard cubic transformation. Replacing $\varphi$ with $\varphi\psi^{-1}$ decreases the number of links, so we conclude by induction.
\end{proof}

\subsubsection*{Twisting maps and factorisation}
Choose a real line $L\subset \p^3$, which does not meet $Q_{3,1}(\r)$. The projection from $L$ gives a morphism $\pi_L\colon Q_{3,1}(\r)\to \p^1(\r)$, which induces a conic bundle structure on the blow-up $\tau_L\colon \d_6\to Q_{3,1}$ of the two imaginary points of $L\cap Q_{3,1}$.

We denote by $T(Q_{3,1},\pi_L)\subset \Aut(Q_{3,1}(\r))$ the group of elements $\varphi\in \Aut(Q_{3,1}(\r))$ such that $\pi_L\varphi=\pi_L$ and such that the lift $(\tau_L)^{-1}\varphi\tau_L\in \Aut(\d_6(\r))$ preserves the set of two imaginary $(-1)$-curves which are sections of the conic bundle $\pi_L\tau_L$.

Any element $\varphi\in T(Q_{3,1},\pi_L)$ is called a \emph{twisting map of $Q_{3,1}$ with axis $L$}. 

Choosing the line $w=x=0$ for $L$, we can get the more precise description given in \cite{hm3,km1}: the twisting maps corresponds in local coordinates $(x,y,z)\mapsto (1:x:y:z)$ to 
$$
\varphi_M \colon 
(x,y,z)\mapsto \bigl(x,(y,z)\cdot M(x)\bigr)
$$
where  $M\colon[-1,1] \to O(2)\subset \PGL(2,\r)=\Aut(\p^1)$ is a real algebraic map.

\begin{prop} 
Any twisting map with axis $L$ is a composition of twisting maps with axis $L$, of degree $1$ and $3$.
\end{prop}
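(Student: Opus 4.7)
The plan is to lift $\varphi$ to $\tilde\varphi=\tau_L^{-1}\varphi\tau_L\in\Bir_\r(\d_6)$, which is an isomorphism on real points, satisfies $\pi\tilde\varphi=\pi$ for the conic bundle $\pi=\pi_L\tau_L\colon\d_6\to\p^1$, and preserves the pair $\{S_1,S_2\}$ of imaginary conjugate $(-1)$-sections of $\tau_L$ setwise. I argue by induction on the number of base points of $\tilde\varphi$: when $\tilde\varphi$ is biregular, it descends along $\tau_L$ to a biregular automorphism of $Q_{3,1}$ (because $\{S_1,S_2\}$ is preserved), yielding a twisting map of degree~$1$, which settles the base case.

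The key claim for the inductive step is that every base point of $\tilde\varphi$ lies on $S_1\cup S_2$. Because $\pi\tilde\varphi=\pi$ and each generic fibre $F\cong\p^1$, the restriction $\tilde\varphi|_F$ is birational and extends to an automorphism of $F$, so $\tilde\varphi$ is defined along $F$ unless $F$ is contracted; and the explicit local form $\varphi_M(x,y,z)=(x,(y,z)\cdot M(x))$ shows that such a contraction corresponds to $M(x)$ degenerating to a rank-$1$ matrix at an imaginary pole, whose image is one of the two $O(2)$-fixed points of $F_x$, i.e., a point of $F_x\cap(S_1\cup S_2)$. The two reducible fibres contribute no base points either: each component maps isomorphically onto a component of a reducible fibre by fibre-preservation, so is not contracted, and the nodes are real.

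Granting the claim, pick a conjugate pair of base points $q\in S_1$, $\bar q\in S_2$ of $\tilde\varphi$ and let $\tilde\psi\colon\d_6\dasharrow\d_6$ be the special Sarkisov link of type~$\II$ centred at $\{q,\bar q\}$ from Example~\ref{Exa:Sarkisov}(5). A direct intersection calculation shows $\tilde\psi$ preserves $\{S_1,S_2\}$: the strict transforms of $S_1,S_2$ become $(-2)$-curves after the blow-up of $q,\bar q$, each meets exactly one of the contracted fibres $F_q^Z,F_{\bar q}^Z$, and so descends to a $(-1)$-section of the target $\d_6$. The map $\psi=\tau_L\tilde\psi\tau_L^{-1}\colon Q_{3,1}\dasharrow Q_{3,1}$ therefore has four base points on $Q_{3,1}$: the proper points $\ell,\bar\ell\in L\cap Q_{3,1}$ and the infinitely near $q,\bar q$ on $S_1,S_2$; by the analogue of Lemma~\ref{Lem:ThreeLinksDegree3} adapted to a special link in place of a standard one, $\psi$ is the special cubic transformation of Example~\ref{Exa:CubicSpe}, and as $\tilde\psi$ preserves $\{S_1,S_2\}$ it is a twisting map of axis $L$ and degree~$3$. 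Then $\tilde\psi^{-1}\tilde\varphi$ has strictly fewer base points than $\tilde\varphi$ and still preserves $\pi$ and $\{S_1,S_2\}$, hence corresponds to a twisting map $\psi^{-1}\varphi\in T(Q_{3,1},\pi_L)$ to which the induction hypothesis applies, giving $\varphi=\psi\cdot(\psi^{-1}\varphi)$ as required.

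The main obstacle is justifying the key claim on the location of base points. The argument above relies on the explicit formula $\varphi_M$ and its $O(2)$-fibrewise action; a more intrinsic approach would proceed via Lemma~\ref{Lem:Simple} by first decomposing $\tilde\varphi$ into standard links of type~$\II$ and then regrouping consecutive ones into special links centred on $\{S_1,S_2\}$ (which descend to cubic twisting maps on $Q_{3,1}$), but ensuring such a regrouping is always possible requires careful bookkeeping of which imaginary $(-1)$-curves play the role of the distinguished sections throughout the Sarkisov decomposition.
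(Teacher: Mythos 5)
Your proof is correct and follows essentially the same route as the paper: lift the twisting map along $\tau_L$ to a fibration-preserving birational self-map of $\d_6$, observe that all of its base-points lie on the two imaginary $(-1)$-sections, and peel off links of type $\mathrm{II}$ centred on those sections, each of which descends to a degree-$3$ twisting map. The paper compresses the last step into ``argue as in Lemma~\ref{Lem:Simple}'' while you carry out the induction and the preservation of $\{S_1,S_2\}$ explicitly; the only slips are cosmetic (the relevant link is Example~\ref{Exa:Sarkisov}(9) rather than (5), and Lemma~\ref{Lem:ThreeLinksDegree3} already treats special links, so no ``adaptation'' is needed).
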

\begin{proof}
We can asume that $L$ is the line $y=z=0$.

The projection $\tau\colon \d_6\to Q_{3,1}$ is a link of type $\III$, described in Example~$\ref{Exa:Sarkisov}(3)$, which blows-up two imaginary points of $Q_{3,1}$. The fibres of the Mori Fibration $\pi\colon \d_6\to \p^1$ correspond then, via $\tau$, to the fibres of $\pi_L\colon Q_{3,1}(\r)\to \p^1(\r)$. Hence, a twisting map of $Q_{3,1}$ corresponds to a map of the form $\tau\varphi\tau^{-1}$, where $\varphi\colon \d_6\dasharrow \d_6$ is a birational map such that $\pi\varphi=\pi$, and which preserves the set of two $(-1)$-curves. This implies that $\varphi$ has all its base-points on the two $(-1)$-curves. It remains to argue as in Lemma~\ref{Lem:Simple}, and decompose $\varphi$ into links that have only base-points on the set of two $(-1)$-curves.
\end{proof}

\section{Generators of the group $\Aut(\F_0(\r))$}\label{F0}
\begin{proof}[Proof of Theorem~$\ref{ThmTorus}$]
Let us prove that any $\varphi\in \Aut(\F_0(\r))$ is generated by $\Aut_\r(\F_0)$ and by the the involution
$$\tau_0\colon ((x_0:x_1),(y_0:y_1))\dasharrow ((x_0:x_1),(x_0y_0+x_1y_1:x_1y_0-x_0y_1)).$$
Observe that $\tau_0$ is a Sarkisov link $\F_0\dasharrow \F_0$ that is the blow-up of the two imaginary points $p=((\im:1),(\im:1))$, $\bar{p}=((-\im:1),(-\im:1))$, followed by the contraction of the two fibres of the first projection $\F_0\to \p^1$ passing through $p,\bar{p}$.

 We can assume that $\varphi\not\in \Aut_\r(\p^2)$, decompose it into Sarkisov links: $\varphi=\varphi_r\circ \cdots \circ \varphi_1$.  
As already explained, if $\varphi_i$ is of type $\mathrm{I}$ or $\mathrm{II}$, each base-point of $\varphi_i$ is a base-point of $\varphi_r\dots\varphi_i$. This implies that all links are either of type $\mathrm{IV}$, or of $\mathrm{II}$, from $\F_{2d}$ to $\F_{2d'}$, with exactly two imaginary base-points. Moreover, by Lemma~\ref{Lem:Simple}, we can assume that all links of type $\mathrm{II}$ are standard, so all go from $\F_0$ to $\F_0$.
 
 Each link of type $\mathrm{IV}$ is an element of $\Aut_\r(\F_0)$. 
 
 Each link $\varphi_i$ of type $\mathrm{II}$ consists of the blow-up of two imaginary points $q,\bar{q}$, followed by the contraction of the fibres of the first projection $\F_0\to \p^1$ passing through $q, \bar{q}$. Since the two points do not belong to the same fibre by any projection, we have $q=((a+\im b:1), (c+\im d:1))$, for some $a,b,c,d\in \r$, $bd\not=0$. There exists thus an element  $\alpha\in \Aut_\r(\F_0)$ that sends $q$ onto $p$ and then $\bar{q}$ onto $\bar{p}$. In consequence, $\tau_0\alpha (\varphi_i)^{-1}\in \Aut_\r(\p^2)$. This yields the result.
\end{proof}

\section{Other results}\label{Other}


\subsection*{Infinite transitivity on surfaces}
The group of automorphisms of a complex algebraic variety is small: indeed, it is  finite in general. Moreover, the group of automorphisms is $3$-transitive only if the variety is $\p^1$. On the other hand, it was  proved in \cite{hm3} that for a real rational surface $X$, the group of automorphisms $\Aut(X(\r))$ acts $n$-transitively on $X(\r)$ for any $n$. The next theorem determines all real algebraic surfaces $X$ having a group of automorphisms which acts infinitely transitively on $X(\r)$. 

\begin{dfn}
Let $G$ be a topological group acting continuously on a topological space $M$.
 We say that two $n$-tuples of distinct points $(p_1,\dots,p_n)$ and $(q_1,\dots,q_n)$ are \emph{compatible} if there exists an homeomorphism $\psi \colon M \to M$ such that $\psi(p_i)=q_i$ for each $i$. 
 The action of $G$ on $M$ is then said to be \emph{infinitely transitive} if for any pair of compatible $n$-tuples of points $(p_1,\dots,p_n)$ and $(q_1,\dots,q_n)$ of $M$, there exists an element $g \in G$ such that $g(p_i)=q_i$ for each $i$. More generally, the action of $G$ is said to be infinitely transitive \emph{on each connected component} if we require the above condition only in case, for each $i$, $p_i$ and $q_i$ belong to the same connected component of $M$.  
\end{dfn}

\begin{thm}\cite{blm1}
Let $X$ be a nonsingular real projective surface. The group $\aut\bigl(X(\r)\bigr)$ is then infinitely transitive on each connected component if and only if $X$ is geometrically rational and $\#X(\r) \leq3$.
 \end{thm}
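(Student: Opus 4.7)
The plan is to establish both implications using the classification of minimal real rational surfaces (Theorem~\ref{thm.mini}), the Sarkisov factorization (Proposition~\ref{Prop:Sarkisov}), and the explicit generators for $\Aut(X(\r))$ obtained in Theorems~\ref{ThmRongaVust}, \ref{Thmkm}, and~\ref{ThmTorus}.

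For the necessity direction, I would exhibit two separate obstructions. First, if $X$ is not geometrically rational, then $\Bir_\r(X)$ is too small to act infinitely transitively on any real component: when $\kappa(X_\c)\ge 0$, every birational self-map descends to a regular automorphism of a unique minimal model and thus lies in a finite-dimensional algebraic group, bounding the transitivity degree by the dimension of that group; when $X$ is geometrically ruled but not rational, the unique ruling (whose base has positive genus) is preserved by $\Bir_\r(X)$, so no automorphism moves a point off its fibre, ruling out even $3$-transitivity. Second, suppose $X$ is geometrically rational with $\#X(\r)\ge 4$. An equivariant MMP reduces $X$ to one of the minimal models of Theorem~\ref{thm.mini}, each of which has connected real locus; hence $X$ necessarily carries a non-trivial conic bundle structure $X\to \p^1$ whose real singular fibres partition $\p^1(\r)\simeq \sS^1$ into arcs, and this induces an invariant cyclic ordering on the components of $X(\r)$ preserved up to reversal by every element of $\Aut(X(\r))$. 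This combinatorial rigidity prevents infinite transitivity on any single component, since too many compatible $n$-tuples have incompatible cyclic data.

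For the sufficiency direction, the reduction via MMP leaves two steps. First, handle the minimal cases $\p^2$, $Q_{3,1}$, and $\F_0$: here $X(\r)$ is $\p^2(\r)$, $\sS^2$, or $\sS^1\times\sS^1$, and the explicit generators of $\Aut(X(\r))$ established in this paper (standard quintics, standard cubics, and the involution $\tau_0$) combined with the $2$- or $3$-transitive linear subgroups $\PGL(3,\r)$, $\PO(3,1)$, and $\PGL(2,\r)^2\rtimes\z/2\z$ yield infinite transitivity by induction on $n$: move the first target point by a linear automorphism, then use a standard quintic, cubic, or $\tau_0$-type transformation whose conjugate imaginary base-points cluster near the already-placed point to fix it and move the next one, iterating as in the scheme of \cite{hm3}. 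Second, a general geometrically rational $X$ with $\#X(\r)\le 3$ is obtained from one of these minimal models by blow-ups of conjugate imaginary pairs (which leave $X(\r)$ unchanged) and a controlled number of real points; the automorphisms of the minimal model lift once one verifies, via Corollary~\ref{Cor:DecSimple}, that the indeterminacies can be resolved in the real locus, and that the compatible permutations of the two or three components are all realizable by explicit birational involutions.

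The main obstacle will be the sharp threshold at three components. On the sufficiency side, constructing automorphisms that realize every compatible permutation of two or three connected components requires a careful case-by-case geometric analysis of the intermediate surfaces (e.g.\ real conic bundles with a small number of singular real fibres and certain real del Pezzo surfaces of low degree), and verifying that the ``local flexibility'' provided by quintic/cubic transformations propagates across components. On the necessity side, ruling out the case $\#X(\r)\ge 4$ must be done without appealing to any specific geometric construction: the crux is showing that the cyclic-order invariant I sketched above is genuinely an invariant of $\Aut(X(\r))$, which amounts to a detailed study of the action on the set of components and on the Galois-invariant part of the Picard lattice.
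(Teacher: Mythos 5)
This statement is quoted from \cite{blm1} in the survey Section~\ref{Other}; the paper contains no proof of it, so there is nothing internal to compare your argument against, and I can only assess the proposal on its own terms. On those terms it has a fatal logical flaw in the necessity direction for $\#X(\r)\ge 4$. The invariant you propose --- a cyclic ordering on the \emph{set of connected components} of $X(\r)$ coming from a conic bundle --- can only obstruct the way automorphisms permute components. But the notion being disproved is infinite transitivity \emph{on each connected component}: by the definition recalled just before the theorem, one only compares $n$-tuples $(p_1,\dots,p_n)$ and $(q_1,\dots,q_n)$ with $p_i$ and $q_i$ in the same component, so the identity permutation of components is always available and your cyclic data agree automatically on both sides. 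No invariant of the action on the component set can distinguish such tuples. The obstruction must live \emph{inside} a single component; in \cite{blm1} it does: when $X(\r)$ has enough components the real conic bundle fibration is essentially unique and is therefore preserved by every element of $\Aut(X(\r))$, so a pair of distinct points of one component lying on the same real fibre is an invariant configuration, yet it is topologically compatible with a pair not on a common fibre. That is what kills infinite transitivity within a component. A secondary gap in the same direction: from $\#X(\r)\ge 4$ you cannot conclude that $X$ carries a conic bundle structure, because Theorem~\ref{thm.mini} classifies minimal \emph{$\r$-rational} surfaces (all with connected real locus); a geometrically rational surface with disconnected real locus can also have as minimal model a del Pezzo surface of degree $1$ or $2$ with Picard number one (with four or five real components), and these require a separate argument.

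The sufficiency direction is closer in spirit to what is actually done (moving points one at a time with maps having only imaginary base-points, as in \cite{hm3}), but as written it only really covers the connected case $\#X(\r)=1$, where Theorems~\ref{ThmRongaVust}, \ref{Thmkm} and~\ref{ThmTorus} apply. The substance of the theorem is the case of two or three components, where $X$ is not $\r$-rational, none of the generation results of this paper are available, and one must construct by hand enough fibrewise ``twisting'' maps on minimal conic bundles (and handle the low-degree del Pezzo cases) to achieve infinite transitivity inside each spherical component. Your proposal acknowledges this as ``the main obstacle'' but does not supply the construction; that construction, together with the fibration-invariance obstruction above, is precisely the content of \cite{blm1}.
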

 
\subsection*{Density of automorphisms in diffeomorphisms}

In \cite{km1}, it is proved that $\aut\bigl(X(\r)\bigr)$ is dense in $\diff\bigl(X(\r)\bigr)$ for the $\mathcal{C}^\infty$-topology when $X$ is a geometrically rational surface with $\#X(\r)=1$ (or equivalently when $X$ is rational). 
In the cited paper, it is said that $\#X(\r)=2$ is probably the only other case where the density holds. 
The following collect the known results in this direction. 

\begin{thm}\cite{km1,blm1}
Let $X$ be a smooth real projective surface. 
\begin{itemize}
\item If $X$ is not a geometrically rational surface, then $\overline{\aut\bigl(X(\r)\bigr)}\ne\diff\bigl(X(\r)\bigr)$;
\item If $X$ is a geometrically rational surface, then
\begin{itemize}
\item If $\#X(\r)\geq 5$, then $\overline{\aut\bigl(X(\r)\bigr)}\ne\diff\bigl(X(\r)\bigr)$;

\item if $\#X(\r)=3,4$, then for most $X$, $\overline{\aut\bigl(X(\r)\bigr)}\ne\diff\bigl(X(\r)\bigr)$;
\item if  $\#X(\r) =1$, 
then $\overline{\aut\bigl(X(\r)\bigr)}=\diff\bigl(X(\r)\bigr)$. 
\end{itemize}
\end{itemize}
Here the closure is taken in the $\mathcal{C}^\infty$-topology.
\end{thm}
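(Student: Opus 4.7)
The plan is to treat separately the four cases of this summary statement, since they collect results from \cite{km1, blm1} that draw on rather different techniques.

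For the positive case $\#X(\r) = 1$, where $X$ is necessarily rational by Corollary~\ref{cor.mini}, I would combine Nash's approximation theorem, the infinite transitivity of $\aut(X(\r))$ stated just above, and the explicit generator Theorems~\ref{ThmRongaVust}, \ref{Thmkm}, \ref{ThmTorus}. Given $\phi \in \diff(X(\r))$, the plan is: (i) approximate $\phi$ by a Nash diffeomorphism; (ii) use infinite transitivity to pre-compose with an element of $\aut(X(\r))$ so that the result agrees with $\phi$ on a sufficiently dense finite subset of $X(\r)$; (iii) iterate local corrections built as products of standard quintic, cubic, or quadratic Cremona transformations whose imaginary base points are placed near a prescribed compact region, so that each correction is arbitrarily close to the identity outside that region. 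The main obstacle is step (iii): the generators act globally on $X(\r)$, so producing a Cauchy sequence in the $\mathcal{C}^\infty$-topology requires a careful positioning of base points to keep previously-corrected regions essentially undisturbed, and this is the delicate heart of the argument in \cite{km1}.

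For the negative case when $X$ is not geometrically rational, the plan is to show that $\aut(X(\r))$ is far too small to be $\mathcal{C}^\infty$-dense. Any $\psi \in \aut(X(\r))$ extends to a birational automorphism of the complexification $X_{\c}$ and hence preserves the canonical class; when $X$ has non-negative Kodaira dimension, this forces $\aut(X(\r))$ into a countable group (often finite, up to a finite-dimensional connected component), whereas $\diff(X(\r))$ is an infinite-dimensional Fr\'echet Lie group. Density then fails for trivial cardinality/dimension reasons.

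For the remaining negative cases within the geometrically rational class, the plan is to extract a topological obstruction on the set of connected components of $X(\r)$. An automorphism permutes the components, and by decomposing into Sarkisov links via Corollary~\ref{Cor:DecSimple}, degree--multiplicity inequalities in the spirit of Lemma~\ref{Lem:AutP2des} restrict the realisable permutations to a proper subgroup of $\Sym(\pi_0(X(\r)))$ as soon as $\#X(\r) \geq 5$; the cases $\#X(\r) \in \{3,4\}$ are borderline and must be checked model by model, which accounts for the ``most $X$'' hedge. Overall, the positive case is the genuinely hard one: the negative cases reduce to rigidity/invariance arguments coming from classical birational geometry, while the positive case demands an explicit convergent approximation scheme, and it is exactly there that the single-component hypothesis $\#X(\r) = 1$ becomes indispensable.
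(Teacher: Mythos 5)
First, note that the paper itself offers no proof of this statement: it appears in Section~\ref{Other} purely as a survey of results quoted from \cite{km1} and \cite{blm1}, so there is no in-paper argument to compare yours against. Judged on its own terms, your plan has genuine gaps in all three branches. In the positive case $\#X(\r)=1$, your step (iii) is the entire content of \cite{km1} and is asserted rather than proved: a standard quintic or cubic transformation is a global map of degree $5$ or $3$, and you give no reason why placing its imaginary base points ``near a prescribed compact region'' makes it $\mathcal{C}^\infty$-close to the identity outside that region. The actual mechanism in \cite{km1} is different, namely a fragmentation of $\diff(X(\r))$ into diffeomorphisms preserving suitable conic fibrations, which are then approximated by the algebraic twisting maps $\varphi_M$ recalled at the end of Section~\ref{Q31}, using Weierstrass-type approximation of the matrix-valued map $M\colon[-1,1]\to \O(2)$.

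In the negative cases the proposed obstructions do not work as stated. For $X$ not geometrically rational, your cardinality argument fails for geometrically ruled surfaces: if $X$ is birational over $\r$ to $C\times\p^1$ with $C$ a non-rational curve, then $\aut(X(\r))$ contains an infinite-dimensional group of fibrewise transformations and is certainly not countable; the correct obstruction is that every element preserves the ruling (or, in non-negative Kodaira dimension, a canonical fibration or class), hence cannot approximate a diffeomorphism destroying that structure. For $\#X(\r)\ge 5$, your obstruction via the induced permutation of $\pi_0\bigl(X(\r)\bigr)$ cannot suffice: if density fails it must already fail on the subgroup of diffeomorphisms isotopic to the identity, and these induce the trivial permutation, so no restriction on realisable permutations of components can detect the failure. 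The arguments of \cite{blm1} are again of a fibration-preservation and non-transitivity nature rather than the combinatorial kind you propose, and the ``most $X$'' hedge for $\#X(\r)\in\{3,4\}$ reflects a finer analysis than a model-by-model check of permutation groups.
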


\end{document}